\numberwithin{equation}{section}
\newtheorem{lemma}{Lemma}
\newtheorem{theorem}{Theorem}
\newcommand\blfootnote[1]{%
  \begingroup
  \renewcommand\thefootnote{}\footnote{#1}%
  \addtocounter{footnote}{-1}%
  \endgroup
}
\newcommand{\cksmalla}{C_1}
\newcommand{\cLlaa}{C_2}
\newcommand{\cLlab}{C_3}
\newcommand{\cCorEthree}{C_4}
\newcommand{\cCorEthreeb}{\cLlab\nu}
\newcommand{\cLnalowera}{C_5}
\newcommand{\cLnalowerb}{C_6}
\newcommand{\cBlocka}{C_7}
\newcommand{\cBlockb}{C_8}
\newcommand{\cfinala}{C_9}
\newcommand{\cfinalb}{C_{10}}
\newcommand{\Ethreenl}{G^{(n)}_\ell(\epsilon)}
\newcommand{\Ethreen}{G^{(n)}(\epsilon)}
\newcommand{\Efournk}{H^{(n)}_k}
\newcommand{\Efourn}{H^{(n)}}
\newcommand{\Efiven}{J^{(n)}}
\newcommand{\Esixnk}{I^{(n)}_k}
\newcommand{\Esixn}{I^{(n)}}
\newcommand{\tildel}[1]{\hat{#1}}
\DeclareMathOperator{\E}{\mathbb{E}}
\DeclareMathOperator{\bbP}{\mathbb{P}}
\DeclareMathOperator{\Var}{Var}
\title{On the Variance of the Length of the Longest Common Subsequences in Random Words With an Omitted Letter}
\author{ Christian Houdr\'e\thanks{School of Mathematics,
Georgia Institute of Technology,
686 Cherry Street,
Atlanta, GA 30332-0160 USA, houdre@math.gatech.edu. Research supported in part by the grant $\# 246283$ and $\# 524678$ from the Simons Foundation.}
\and
Qingqing Liu\thanks{School of Mathematics,
Georgia Institute of Technology,
686 Cherry Street,
Atlanta, GA 30332-0160 USA, qqliu@gatech.edu}}
\begin{document}
\maketitle

\blfootnote{Keywords: Longest common subsequences, variance, lower bound}
\blfootnote{MSC 2010: 60C05, 60F10, 05A05}

\begin{abstract}
We investigate the variance of the length of the longest common subsequences of two independent random words of size $n$, where the letters of one word are i.i.d.\ uniformly drawn from  $\{\alpha_1, \alpha_2, \cdots, \alpha_m\}$, while the letters of the other word are i.i.d.\  drawn from $\{\alpha_1, \alpha_2, \cdots, \alpha_m, \alpha_{m+1}\}$, with probability $p > 0$ to be $\alpha_{m+1}$, and $(1-p)/m > 0$ for all the other letters. The order of the variance of this length is shown to be linear in $n$. 
\end{abstract}

\section{Introduction and Statement of Results}
Let $\bm{X} =(X_i)_{i \ge 1}$ and $\bm{Y} = (Y_i)_{i \ge 1}$ be two independent sequences of i.i.d.~random variables taking their values in a finite common alphabet $\mathcal{A}$, with $\bbP(X_1=\alpha) = p_{x,\alpha}\ge 0$ and $\bbP(Y_1=\alpha)=p_{y,\alpha}\ge 0$, $\alpha\in \mathcal{A}$. Let $LC_n$ be the largest $k$ such that there exist $1\le i_1 < \cdots <i_k \le n$ and $1\le j_1 < \cdots <j_k \le n$ with $X_{i_s}=Y_{j_s}$  for $s = 1,\ldots,k$, i.e., $LC_n$ denotes the length of the longest common subsequences of the random words $\bm{X}^{(n)} := X_1\cdots X_n$ and $\bm{Y}^{(n)} := Y_1\cdots Y_n$. 
The limiting behavior of the expectation of $LC_n$ has been extensively studied. In particular, if for all $\alpha\in\mathcal{A}$, $p_{x,\alpha}=p_{y,\alpha}=1/(\# \mathcal{A})$, where $\# \mathcal{A}$ denotes the cardinality of $\mathcal{A}$, the earliest result is due to Chv{\'a}tal and Sankoff \cite{chvatal_longest_1975}, who proved the existence of
\begin{equation*}
\gamma_m^*=\lim_{n\to\infty}\frac{\E LC_n}{n},
\end{equation*}
where $m$ denotes the alphabet size, showing also that $0.727273\le \gamma_2^* \le
0.905118$. Much work has since been done to improve these
bounds (\cite{deken_limit_1979}, \cite{chvatal_upper-bound_1983}, \cite{deken_probabilistic_1983}, \cite{dancik_expected_1994}, $\ldots$), and to date the best known bounds seem to be $0.788071 \le \gamma_2^* \le 0.826280$, see \cite{lueker_improved_2009}. These results have also been extended to multiple sequences and alphabet of size larger than two, e.g., see \cite{kiwi_speculated_2009}, \cite{liu_2017} and the references therein. 

The study of the variance of $LC_n$ is less complete. In case $p_{x,k} = p_{y,k} = p_k $ for $k=1,\ldots,m$, the Efron-Stein inequality implies, as shown in \cite{steele_efron-stein_1986}, that
\begin{equation*}
\Var LC_n \le n\left(1-\sum_{k=1}^m p_k^2\right).
\end{equation*}
. 
 
For lower bounds, linear order results are also proved in various biased instances (\cite{lember_standard_2009}, \cite{houdre_order_2016}, \cite{houdre_variance_2016}, \cite{lember_lower_2016}, \cite{gong_lower_2016}, \cite{amsalu_sparse_2016}, \cite{bonetto_fluctuations_2006},$\ldots$). For example, \cite{lember_standard_2009} and \cite{houdre_order_2016} assume that one of the letters has a significantly higher probability of appearing than any of the other letters in the alphabet, while \cite{bonetto_fluctuations_2006} assumes that one of the two sequences is binary while the other is a trinary one. Our paper extends the result of \cite{bonetto_fluctuations_2006} by removing the binary/trinary assumptions and provides precise estimates allowing us to go beyond the uniform case and to also deal with central moments.


To formally state our problem, let $\mathcal{A} := \mathcal{A}_{m+1} = \{\alpha_1, \alpha_2, \cdots, \alpha_m, \alpha_{m+1} \}$, and let the letters distribution of $\bm{X}$ to be such that
$$  \mathbb{P}(X_1 = \alpha_1 ) = \cdots = \mathbb{P}(X_1 = \alpha_m) = \frac{1-p}{m} > 0, \hspace{.1 in} \mathbb{P}(X_1 = \alpha_{m+1} )  = p > 0 ,$$
while the letters distribution of $\bm{Y}$ is such that
$$\mathbb{P}(Y_1 = \alpha_1 ) = \cdots = \mathbb{P}(Y_1 = \alpha_m )  = \frac{1}{m}.$$


To start with, an upper bound on the variance of $LC_n$ is shown to be 
$$\Var LC_n \le \frac{n}{2}\left( 2 - p^2 - \frac{1 + (1-p)^2}{m} \right),$$
for all $n \in \mathbb{N}$. 
Indeed, the Efron–Stein inequality 
states that:
\begin{equation}
\label{eqn:efronstein}
\Var S \le \frac{1}{2}  \sum_{i =1}^{n} \mathbb{E}(S-S_i)^2, 
\end{equation}
where, $S = S(Z_1, Z_2, \cdots, Z_n)$ and $S_i = S(Z_1, Z_2, \cdots, Z_{i - 1}, \hat{Z_i}, Z_{i+1}, \cdots  Z_n)$, and where $(Z_i)_{1\le i \le n}$ and  $(\hat{Z_i})_{1\le i \le n}$ are independent copies of each other. 

Now following \cite{steele_efron-stein_1986},
\begin{align*}
 & \mathbb{E}| LC_n - LC_n(X_1\cdots X_{i-1} \hat{X_i} X_{i+1} \cdots X_n; Y_1 \cdots Y_n ) |^2  \\
& \hspace{3em} =  \mathbb{E} \left ( |LC_n - LC_n(X_1\cdots  X_{i-1} \hat{X_i}  X_{i+1} \cdots X_n; Y_1 \cdots Y_n)|^2 \bm{1}_{X_i \neq \hat{X_i}} \right) \\
& \hspace{3em} \le  \mathbb{P}(X_i \neq \hat{X_i})= 1 - \sum_{i = 1}^{m+1} \left( \mathbb{P}(X_1 = \alpha_i) \right)^2 \\
& \hspace{3em} = 1 - m \left( \frac{1-p}{m} \right)^2 - p^2\\
& \hspace{3em} = (1-p) \left( 1 - \frac{1}{m} + p \left( 1 + \frac{1}{m}\right) \right),
\end{align*}

since when replacing $X_i$ by $\hat{X_i}$, $LC_n$ changes by at most $1$ and at least $-1$.
Similarly, 
\begin{align*}
\mathbb{E}| LC_n - LC_n(X_1\cdots X_n; Y_1 \cdots  Y_{i-1} \hat{Y_i}  Y_{i+1}\cdots Y_n ) |^2  
& \le 1 - \sum_{i = 1}^{m} \left( \mathbb{P}(Y_1 = \alpha_i) \right)^2 \\
&= 1 - \frac{1}{m}.
\end{align*}

Applying \eqref{eqn:efronstein} and combining the two bounds above give, 
\begin{align}
\label{eqn:varupp}
\Var LC_n & \le \frac{1}{2} \left\{ \left( (1-p) \left( 1 - \frac{1}{m} + p \left( 1 + \frac{1}{m}\right) \right) \right) n  + \left(1- \left( \frac{1}{m} \right) \right) n  \right\} \nonumber\\
& = \frac{n}{2}\left( 2 - p^2 - \frac{1 + (1-p)^2}{m} \right) .
\end{align}

To match the easy bound \eqref{eqn:varupp}, we can now state the main result of this paper.  

\begin{theorem}
\label{theorem:main}
There exists a constant $C = C(p, m) > 0$ independent of $n$, such that for all $n \ge 1$,
\begin{equation}
   \Var LC_n \ge Cn. 
   \label{eq:main}
\end{equation}
\end{theorem}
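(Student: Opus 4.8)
The letter $\alpha_{m+1}$ never appears in $\bm{Y}^{(n)}$, so no position of $\bm{X}^{(n)}$ carrying $\alpha_{m+1}$ can ever be matched; consequently $LC_n$ equals the length of the longest common subsequence of $\bm{Y}^{(n)}$ with the subword $\tilde{\bm{X}}$ obtained from $\bm{X}^{(n)}$ by deleting all its $\alpha_{m+1}$'s. Let $N := \#\{1 \le i \le n : X_i = \alpha_{m+1}\}$, so that $N \sim \mathrm{Bin}(n,p)$. Conditionally on $N = k$, the word $\tilde{\bm{X}}$ has length $n-k$ with i.i.d.\ letters uniform on $\{\alpha_1,\dots,\alpha_m\}$ (the positions of the deleted letters being irrelevant), independent of $\bm{Y}^{(n)}$. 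Hence $\E[LC_n \mid N = k] = f(k)$, where
\[
f(k) := \E\,\LCS\big(U_{n-k},\, V_n\big),
\]
with $U_{n-k}$ and $V_n$ independent uniform words of lengths $n-k$ and $n$ over the $m$-letter alphabet. By the law of total variance,
\[
\Var LC_n \ge \Var\big(\E[LC_n \mid N]\big) = \Var f(N),
\]
so it suffices to bound $\Var f(N)$ below by $Cn$.

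The function $f$ is nonincreasing, and $f(k)-f(k+1)$ is precisely the expected marginal gain produced by one extra matchable letter in the shorter word. The crux of the proof is the following \emph{slope estimate}: there are constants $c>0$, $a>0$ (depending only on $p,m$) and an $n_0$ such that, for all $n \ge n_0$ and all integers $k$ with $|k-np| \le a\sqrt{n}$, one has $f(k)-f(k+1) \ge c$. Granting this, I would write $\Var f(N) = \tfrac12\,\E\big(f(N_1)-f(N_2)\big)^2$ for independent copies $N_1,N_2$ of $N$, and restrict to the event $\{N_1 \in [np-a\sqrt{n},\, np-\tfrac{a}{2}\sqrt{n}]\} \cap \{N_2 \in [np+\tfrac{a}{2}\sqrt{n},\, np+a\sqrt{n}]\}$. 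On this event both variables lie in the window, so telescoping the slope estimate gives $f(N_1)-f(N_2) = \sum_{k=N_1}^{N_2-1}\big(f(k)-f(k+1)\big) \ge c\,(N_2-N_1) \ge c\,a\sqrt{n}$; and by the central limit theorem (binomial anticoncentration) each of the two one-sided events has probability at least some $\delta = \delta(p,m) > 0$, so by independence the joint event has probability at least $\delta^2$. This yields $\Var f(N) \ge \tfrac12\,\delta^2 c^2 a^2\, n = Cn$, proving the theorem.

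The slope estimate is the genuine obstacle; it cannot come from monotonicity alone. The key structural fact I would exploit is that, since $p>0$, the matchable part of $\bm{X}$ has length $\approx n(1-p)$, a \emph{macroscopic} deficit of order $n$ below the length $n$ of $\bm{Y}$, so that ``room'' to absorb an additional match persists uniformly across the $\sqrt{n}$-window. My plan for establishing $f(k)-f(k+1)\ge c$ is a local surgery/coupling argument: on a good event of probability bounded below, starting from an optimal alignment of $U_{n-k-1}$ with $V_n$, one locates a short window containing an unmatched letter of $V_n$ equal to the newly inserted letter and positioned so that it can be appended to the alignment without destroying any earlier match, thereby gaining at least one unit with probability at least $c$. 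Making this quantitative and uniform over the window is where most of the work lies, and is where I would deploy high-probability ``good'' events controlling the local frequencies and the alignment geometry (the events $\Ethreen$, $\Efourn$, $\Efiven$, $\Esixn$ and the associated constants $\cksmalla,\dots,\cfinalb$), together with concentration of $LC_n$ (via \eqref{eqn:efronstein} or Azuma's inequality) to ensure the alignment structure is stable across the $\sqrt{n}$-window. I expect the most delicate point to be showing that the marginal gains do not degenerate near the upper edge of the window; this is controlled precisely by the fact that the length deficit $k \approx np$ stays of order $n$ rather than shrinking.
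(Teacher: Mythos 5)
Your variance decomposition is sound and genuinely different from the paper's. You keep the term $\Var(\mathbb{E}[LC_n \mid N])$ from the law of total variance and combine a slope estimate for the deterministic function $f(k)=\mathbb{E}[LC_n\mid N=k]$ with binomial anticoncentration; the paper instead conditions on the entire path $(L_n(n-k))_{0\le k\le n}$ (the LCS process of an insertion chain $\bm{Z}^{(k)}$ against $\bm{Y}^{(n)}$, built in Lemma~\ref{lemma:LCn_dist}), keeps the expected \emph{conditional} variance, and converts $\Var N$ into $\Var LC_n$ through a reversed-Lipschitz lemma (Lemma~\ref{lemma:var_lowerbd}) on the high-probability event $O_n$. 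Your route buys something real: you only need the slope of the \emph{averaged} function $f$, whereas the paper must establish the pathwise slope $L_n(j)-L_n(i)\ge K(j-i)$ simultaneously over a whole window with probability $1-o(1)$ (Theorem~\ref{theorem:On}), which is strictly stronger and is what forces the union bound, the Hoeffding step, and the $h(n)$ gap in Lemma~\ref{lemma:var_lowerbd}. Two minor repairs your write-up would need: the claim for all $n\ge1$ rather than $n\ge n_0$ is recovered, as in Section~\ref{sec:constants}, by absorbing finitely many $n$ into $C$ (possible since $\Var LC_n>0$ for each fixed $n$); and your window sits at word-lengths $n-k\approx(1-p)n$, so you must invoke the appropriate regime there — perfect matching when $n-k<\nu n$ (Lemma~\ref{lemma:ksmall}) or the drift estimate when $n-k\ge\nu n$.

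The genuine gap is the slope estimate $f(k)-f(k+1)\ge c$ itself, which you rightly call the crux but then only sketch. Your surgery plan — locate an unmatched letter of $V_n$ equal to the inserted letter, positioned so it can be appended without destroying earlier matches — silently assumes that an optimal alignment leaves its unmatched $V_n$-letters spread over \emph{linearly many} gaps. A priori this can fail: if all unmatched letters sat in a handful of long gaps (say one block at the end), a uniformly placed insertion would land usefully only with probability $O(1/n)$, and no constant $c$ would emerge. Ruling this out is exactly what occupies the paper's Section~\ref{sec:On}: one shows $L_n(k)/k\ge\xi_m>1/m$ with high probability (Lemma~\ref{lemma:Lna_lower}), deduces via the subsequence-containment contradiction (Lemmas~\ref{lem:plmldl}, \ref{lemma:Lla}, \ref{lemma:L_choose_delta}) that minimal optimal alignments carry a proportion $\epsilon$ of unmatched letters, and then uses the compartment decomposition of $\bm{Y}^{(n)}$ (Lemmas~\ref{lemma:block} and \ref{lemma:containing}) to prevent those letters from concentrating in few matches, yielding at least $\lambda n$ non-empty matches and hence the drift bound \eqref{lb}. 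Once that machinery exists, your weaker averaged slope follows at once: with $j=n-k-1$, positivity of the increments and \eqref{lb} give $f(k)-f(k+1)=\mathbb{E}[L_n(j+1)-L_n(j)]\ge(\lambda/m)\,\mathbb{P}(F^{(n)}_{j})\ge c$. So your skeleton is correct and would even streamline the paper's endgame, but as written the proposal defers precisely the part of the argument that carries the mathematical weight.
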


This theorem, combined with the upper bound \eqref{eqn:varupp}, gives a linear order, in $n$, for the variance of $LC_n$, and we refer the reader to Section~\ref{sec:constants} for an estimate on $C$.

\section{Proof of Theorem~\ref{theorem:main}}
\label{sec:mainpf}
The scheme of the proof elaborates and extends elements of of \cite{bonetto_fluctuations_2006} and \cite{houdre_order_2016}. So, let $N$ denote the number of letters $\alpha_{m+1}$ in the random word $\bm{X}^{(n)}$. Clearly, $N$ is a binomial random variable with parameter $n$ and $p$. Moreover, let $\tilde{\bm{X}}^{(n)}:=X_{i_1}\cdots X_{i_k}$, where $1\le i_1 < \cdots <  i_k \le n$, $X_j\neq\alpha_{m+1}$ for all $j\in\{i_1,\ldots,i_k\}$ and $X_j=\alpha_{m+1}$ for all $j \in \{1, 2, \ldots, n\}\backslash \{i_1,\ldots,i_k\}$. 
In words, $\tilde{\bm{X}}^{(n)}$ is the subword of $\bm{X}^{(n)}$ made only of non-$\alpha_{m+1}$ letters. 
To prove our main theorem, we will recursively define a finite random sequence $\bm{Z}^{(1)}, \bm{Z}^{(2)}, \ldots, \bm{Z}^{(n)} $, where each $\bm{Z}^{(k)}$ has length $k$, by inserting uniformly at random and at a uniform random location a letter from $\{\alpha_1, \alpha_2, \ldots, \alpha_m\}$ to the previous $\bm{Z}^{(k-1)}$. 

To formally describe the defining mechanism, let $\{U_k\}_{1\le k \le n }$ and $\{T_k\}_{3 \le k \le n}$ be two independent sequences of random variables, where $\{U_k\}_{1 \le k \le n}$ is a sequence of i.i.d.\ uniform random variables on $\{\alpha_1, \alpha_2, \ldots, \alpha_m\}$, and $\{T_k\}_{3 \le k\le n}$ is a sequence of independent random variables uniform on $\{2, 3, \ldots, k-1\}$, $k \ge 3$. 

Then as in \cite{bonetto_fluctuations_2006}, recursively define the sequence $\bm{Z}^{(k)}$ via:
\begin{enumerate}[(1)]
\item $\bm{Z}^{(1)} = U_1$.
\item $\bm{Z}^{(2)} = U_1U_2$. 
\item For $k \ge 2$, given $\bm{Z}^{(k)} = Z_1^kZ_2^k\cdots Z_k^k$, let $\bm{Z}^{(k+1)}$ be as follows:
\begin{itemize}
\item For all $j < T_{k+1}$, let $$Z_j^{k+1} = Z_j^k.$$
\item For $j = T_{k+1}$, let $$Z_j^{k+1} = U_{k+1}.$$
\item For all $j$ such that $T_{k+1} < j \le k+1$, let $$Z_j^{k+1} = Z_{j-1}^k.$$
\end{itemize}
\end{enumerate}

Hence,  $\{Z_i^k \}_{1\le i\le k \le n}$ is a triangular array of uniform random variables with values in $\{\alpha_1, \alpha_2, \ldots, \alpha_m\}$, 
and finding the relation between $\bm{Z}^{(n-N)}$ and  $\tilde{\bm{X}}^{(n)}$ is the purpose of our next lemma whose proof is akin to a corresponding proof in \cite{houdre_order_2016}. 

\begin{lemma}
\label{lemma:LCn_dist}
For any $n\ge 1$ and $1\le k \le n$, 
$$\bm{Z}^{(k)} \stackrel{\text{d}}{=} (\tilde{\bm{X}}^{(n)} | N = n-k),$$
and moreover, 
$$\bm{Z}^{(n-N)} \stackrel{\text{d}}{=} \tilde{\bm{X}}^{(n)},$$ 
where $\stackrel{\text{d}}{=}$ denotes equality in distribution.
\end{lemma}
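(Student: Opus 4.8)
The plan is to show that both words appearing in the first claimed identity are nothing more than uniformly random words of length $k$ over $\{\alpha_1,\ldots,\alpha_m\}$, i.e.\ words of $k$ i.i.d.\ uniform letters; the elaborate insertion mechanism is in fact irrelevant at the level of the marginal law (it matters only for the finer coupling used later). For the $\bm{Z}$-side I would first observe that, for every $k$, the word $\bm{Z}^{(k)}$ is a rearrangement of the i.i.d.\ uniform letters $U_1,\ldots,U_k$: each recursive step inserts one fresh letter and never deletes, so $\bm{Z}^{(k)}$ uses exactly $U_1,\ldots,U_k$, each once, the positions being dictated solely by $T_3,\ldots,T_k$, which are independent of the $U_i$. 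The cleanest way to promote this to a distributional statement is induction on $k$: assuming $\bm{Z}^{(k)}$ has $k$ i.i.d.\ uniform letters, inserting the independent uniform letter $U_{k+1}$ at the independent position $T_{k+1}$ yields, conditionally on $T_{k+1}=t$ and hence unconditionally, a word of $k+1$ i.i.d.\ uniform letters; the base cases $k=1,2$ are immediate from $\bm{Z}^{(1)}=U_1$ and $\bm{Z}^{(2)}=U_1U_2$. Equivalently, for any fixed permutation $\sigma$ independent of the $U_i$, the word $(U_{\sigma(1)},\ldots,U_{\sigma(k)})$ is uniform on the $m^k$ words by exchangeability, and one averages over the random $\sigma$ produced by the $T_k$.

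For the $\tilde{\bm{X}}$-side I would compute the conditional law directly. Conditioning on $\{X_i\neq\alpha_{m+1}\}$ makes $X_i$ uniform on $\{\alpha_1,\ldots,\alpha_m\}$, and the $X_i$ are independent; hence, given $N=n-k$ (so that $\tilde{\bm{X}}^{(n)}$ has length exactly $k$), each of the $m^k$ possible values of the retained letters should be equally likely. A short count confirms this: for a fixed target word $w$ of length $k$, summing over the $\binom{n}{k}$ possible location sets of the non-$\alpha_{m+1}$ letters gives $\mathbb{P}(\tilde{\bm{X}}^{(n)}=w,\,N=n-k)=\binom{n}{k}((1-p)/m)^k p^{n-k}$, and dividing by $\mathbb{P}(N=n-k)=\binom{n}{k}(1-p)^k p^{n-k}$ leaves exactly $1/m^k$. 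Thus $(\tilde{\bm{X}}^{(n)}\mid N=n-k)$ is also uniform on length-$k$ words, matching $\bm{Z}^{(k)}$ and establishing the first identity for $1\le k\le n$ (the degenerate empty-word case $N=n$ being trivial).

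The second identity then follows by marginalizing over $N$, using that the construction of the $\bm{Z}^{(\cdot)}$ (through the $U_k$ and $T_k$) is independent of $\bm{X}^{(n)}$ and hence of $N$. For a word $w$ of length $\ell$, the event $\{\bm{Z}^{(n-N)}=w\}$ forces $n-N=\ell$, so independence gives $\mathbb{P}(\bm{Z}^{(n-N)}=w)=\mathbb{P}(N=n-\ell)\,\mathbb{P}(\bm{Z}^{(\ell)}=w)$, while likewise $\mathbb{P}(\tilde{\bm{X}}^{(n)}=w)=\mathbb{P}(N=n-\ell)\,\mathbb{P}(\tilde{\bm{X}}^{(n)}=w\mid N=n-\ell)$. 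The two right-hand sides agree by the first identity, which completes the proof.

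I expect no serious obstacle here; the only points requiring care are the exchangeability/induction argument that collapses the insertion procedure to a plain i.i.d.\ uniform word, and the bookkeeping in the conditional probability computation together with the marginalization over the random length $n-N$.
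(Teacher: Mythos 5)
Your proof is correct, and its skeleton matches the paper's: induction on $k$ for the first identity, then marginalization over $N$, using the independence of $N$ from the $\bm{Z}$-construction, for the second. You differ in two places, both in the direction of being more explicit or more elementary. First, the paper never actually computes the conditional law of $\tilde{\bm{X}}^{(n)}$ given $N=n-k$; it treats the uniformity of $(\tilde{\bm{X}}^{(n)}\mid N=n-k)$ on the $m^k$ words as self-evident, invoking it silently at the base case and at the end of the induction step. Your count $\mathbb{P}(\tilde{\bm{X}}^{(n)}=w,\,N=n-k)=\binom{n}{k}\bigl((1-p)/m\bigr)^k p^{n-k}$, divided by $\mathbb{P}(N=n-k)=\binom{n}{k}(1-p)^k p^{n-k}$, supplies the missing step cleanly. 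Second, for $\bm{Z}^{(n-N)}\stackrel{\text{d}}{=}\tilde{\bm{X}}^{(n)}$ the paper argues via characteristic functions, expanding $\mathbb{E}\,e^{i\langle u,\tilde{\bm{X}}^{(n)}\rangle}$ over the values of $N$; this is slightly awkward for a vector of random length (the paper fixes $u\in\mathbb{R}^{n-k}$ while $k$ runs over the sum). Your pointwise computation --- for a word $w$ of length $\ell$, the event $\{\bm{Z}^{(n-N)}=w\}$ forces $N=n-\ell$, after which independence factors the probability --- is more elementary and sidesteps that notational issue; it also handles the empty-word case $N=n$ explicitly, which the paper ignores. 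Both routes rest on the same two ingredients (part one of the lemma plus independence of $N$ and the $\bm{Z}$'s), so the arguments are equivalent in substance.
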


\begin{proof}
The proof is by induction on $k$. Let $k = 1$, by definition, $\bm{Z}^{(1)} = U_1$, which has the same distribution as $(\tilde{\bm{X}}^{(n)}|N = n -1)$. Next, assume that 
$$\bm{Z}^{(k)} \stackrel{\text{d}}{=} (\tilde{\bm{X}}^{(n)} | N = n-k),\, 2\le k \le n-1,$$
and so for any $(\alpha_{j_1}, \alpha_{j_2}, \ldots, \alpha_{j_k} ) \in \mathcal{A}^k$,  
$$\mathbb{P}\left( (Z_1^k, Z_2^k, \ldots, Z_k^k) = (\alpha_{j_1}, \alpha_{j_2}, \ldots, \alpha_{j_k} ) \right) = \left( \frac{1}{m} \right)^k.$$

Then,
\begin{align*}
&\hspace{-2em}\mathbb{P}  \left( (Z_1^{k+1}, Z_2^{k+1}, \ldots, Z_{k+1}^{k+1}) = (\alpha_{j'_1}, \alpha_{j'_2}, \ldots, \alpha_{j'_{k+1}} ) \right)  \\
&=  \sum_{t=2}^{k} \mathbb{P}\left( (Z_1^{k+1}, Z_2^{k+1}, \ldots, Z_{k+1}^{k+1}) = (\alpha_{j'_1}, \alpha_{j'_2}, \ldots, \alpha_{j'_{k+1}} ) | T_{k+1} = t \right)  \mathbb{P}(T_{k+1} = t) \\
&=  \sum_{t=2}^{k} \mathbb{P}\left( (Z_1^k, \ldots, Z_{t-1}^{k}, Z_{t}^{k}, \ldots,  Z_{k}^{k}) = (\alpha_{j'_1}, \ldots,\alpha_{j'_{t-1}},\alpha_{j'_{t+1}},\ldots, \alpha_{j'_{k+1}} ) \right)  \mathbb{P}( U_{k+1} = \alpha_{j'_t}) \mathbb{P}(T_{k+1} = t) \\
&=   \sum_{t=2}^{k} \left( \frac{1}{m} \right)^{k} \frac{1}{m} \frac{1}{k-1} \\
&=  \left( \frac{1}{m} \right)^{k+1}.
\end{align*}
Thus, $$\bm{Z}^{(k+1)} \stackrel{\text{d}}{=} (\tilde{\bm{X}}^{(n)} | N = n-k-1).$$

To prove the second part of the lemma, from the independence of $N$ and $\bm{Z}^{(n-k)}$, for any $u \in \mathbb{R}^{n-k}$,
\begin{align*}
\mathbb{E} e^{i <u, \tilde{\bm{X}}^{(n)}>}
&=  \sum_{k = 0}^n  \mathbb{E} \left( e^{i<u, \tilde{\bm{X}}^{(n)} >} | N = k \right) \mathbb{P}(N = k)\\
&=  \sum_{k = 0}^n  \mathbb{E} \left( e^{i<u, \bm{Z}^{(n-k)}>}  \right) \mathbb{P}(N = k) \\ 
&=  \sum_{k = 0}^n  \mathbb{E} \left( e^{i<u, \bm{Z}^{(n-k)}>} |N=k  \right) \mathbb{P}(N = k) \\
&=  \sum_{k = 0}^n  \mathbb{E} \left( e^{i<u, \bm{Z}^{(n-N)}>} |N=k  \right) \mathbb{P}(N = k) \\
&=  \mathbb{E} e^{i<u, \bm{Z}^{(n-N)}>} .
\end{align*}
Thus, 
$$\bm{Z}^{(n-N)} \stackrel{\text{d}}{=} \tilde{\bm{X}}^{(n)}.$$
\end{proof}

Now let $LC_n$ be the length of the longest common subsequences of $\bm{X}^{(n)}$ and $\bm{Y}^{(n)}$, and let $L_n(k)$ be the length of the longest common subsequences/subwords of $\bm{Z}^{(k)}$ and $\bm{Y}^{(n)}$. It follows from Lemma~\ref{lemma:LCn_dist} that,
\begin{equation}
LC_n \stackrel{\text{d}}{=} L_n(n - N),
\label{eq:eqdist}
\end{equation}
and therefore,
\begin{equation}
\Var LC_n = \Var(L_n(n - N)).
\label{eq:eqMr}
\end{equation}

In order to prove the main result, we will also need the following result taken from \cite{houdre_order_2016}. 

\begin{lemma}
\label{lemma:var_lowerbd}
Let $f: D \subset \mathbb{R}\to \mathbb{Z}$ satisfy a local reversed Lipschitz condition, i.e., let $h \ge 0$ and let $f$ be such that for any $i, j \in D$ with $j \ge i + h$, 
$$f(j) - f(i) \ge c (j - i), $$
for some $c > 0$. Let $T$ be a $D$-valued random variable with $\mathbb{E}|f(T)|^2 < \infty$, then 
$$ \Var f(T) \ge \frac{c^2}{2} \left(\Var(T) - h^2\right). $$
\end{lemma}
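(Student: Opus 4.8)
The plan is to reduce everything to the elementary identity expressing a variance through an independent copy. Let $T'$ be an independent copy of $T$, so that
\[
\Var f(T) = \frac{1}{2}\,\E\bigl[(f(T)-f(T'))^2\bigr],
\qquad
\E\bigl[(T-T')^2\bigr] = 2\,\Var T .
\]
Both identities require only finiteness of the relevant second moments: the hypothesis $\E|f(T)|^2<\infty$ gives the first directly, and, since the reversed Lipschitz condition forces $|f(t)|$ to grow at least linearly in $|t|$, it also forces $\E T^2<\infty$, so that the second identity is meaningful and $\Var T<\infty$.

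The heart of the argument is a pointwise comparison on the event $\{|T-T'|\ge h\}$. There, either $T\ge T'+h$ or $T'\ge T+h$; applying the reversed Lipschitz hypothesis with the larger of the two values in the role of $j$ and the smaller in the role of $i$ gives, in both cases, $|f(T)-f(T')|\ge c\,|T-T'|$, and hence
\[
(f(T)-f(T'))^2\,\bm{1}_{\{|T-T'|\ge h\}}\ \ge\ c^2\,(T-T')^2\,\bm{1}_{\{|T-T'|\ge h\}} .
\]
On the complementary event $\{|T-T'|<h\}$ I simply bound the left-hand side below by $0$. Taking expectations and using the first identity yields $\Var f(T)\ge \tfrac{c^2}{2}\,\E[(T-T')^2\,\bm{1}_{\{|T-T'|\ge h\}}]$.

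To finish, I control the discarded part: since $(T-T')^2\,\bm{1}_{\{|T-T'|<h\}}\le h^2$ pointwise, one has $\E[(T-T')^2\,\bm{1}_{\{|T-T'|\ge h\}}]\ge \E[(T-T')^2]-h^2 = 2\,\Var T - h^2$, whence
\[
\Var f(T)\ \ge\ \frac{c^2}{2}\bigl(2\,\Var T - h^2\bigr)\ \ge\ \frac{c^2}{2}\bigl(\Var T - h^2\bigr),
\]
the last inequality being trivial and the displayed chain in fact delivering a bound slightly stronger than the stated one. I do not anticipate a real obstacle: the argument is short, and the only delicate points are the two-sided use of the reversed Lipschitz condition (ensuring $|f(T)-f(T')|\ge c|T-T'|$ irrespective of the ordering of $T$ and $T'$) and the verification that the moment hypothesis legitimizes passing to the independent copy.
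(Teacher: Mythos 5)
Your proof is correct, and there is in fact nothing in this paper to compare it against: Lemma~\ref{lemma:var_lowerbd} is stated without proof and imported from \cite{houdre_order_2016}. Your symmetrization argument --- introducing an independent copy $T'$, writing $\Var f(T)=\frac{1}{2}\E\bigl[(f(T)-f(T'))^2\bigr]$, applying the reversed Lipschitz hypothesis two-sidedly on the event $\{|T-T'|\ge h\}$, and bounding the discarded contribution by $h^2$ --- is the standard route to this inequality and, as far as one can tell, essentially the argument of the cited reference. Two details in your write-up deserve emphasis as genuinely necessary rather than cosmetic: the remark that the hypothesis forces $|f(t)|$ to grow at least linearly on $D$, so that $\E|f(T)|^2<\infty$ implies $\E T^2<\infty$, is exactly what legitimizes the identity $\E\bigl[(T-T')^2\bigr]=2\Var T$ (without it, $\Var T$ might not even be defined); and your chain actually establishes the stronger bound $\Var f(T)\ge \frac{c^2}{2}\bigl(2\Var T-h^2\bigr)$, of which the stated inequality is a trivial weakening since $\Var T\ge 0$. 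So the proposal is not merely adequate; it slightly sharpens the constant, though this sharpening is immaterial for the use made of the lemma in \eqref{E:lowerbound5-new}.
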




Next, let
\begin{equation}
O_n := \bigcap_{\substack{i,j \in I\\j  \ge i + h(n)}}\left\{  L_n(j) - L_n(i) \ge K (j-i) \right\},
\label{eq:On}
\end{equation}
where $I = [np-\sqrt{np(1-p)}, np + \sqrt{np(1-p)} ]$,  $K > 0$ is a constant which does not depend on $n$ ($K\le 1/2m$ will do, see Lemma \ref{lemma:On_intersection}), and where $h(n)$ will also be made precise later. The event $O_n$ can be viewed as the event where the map $k \to L_n(k)$ locally satisfies a reversed Lipschitz condition.

In Section~\ref{sec:On}, we will prove
\begin{theorem}
\label{theorem:On}
For all $n\ge 1$,
\begin{equation}
\mathbb{P}(O_n) \ge 1 - A e^{-B n} - n e^{-2K^2h(n)},
\label{eq:thmOn}
\end{equation}
where, $K$ is given in Lemma~\ref{lemma:On_intersection}, $A = \max\{ \cCorEthree, \cLnalowera, \cBlocka\}$, and $B = \min \{\cCorEthreeb,  \cLnalowerb, \cBlockb \}$, and these constants are given in \eqref{eq:pgn}, Lemma~\ref{lemma:Lna_lower}, and Lemma~\ref{lemma:block} respectively. 
\end{theorem}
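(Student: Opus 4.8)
The plan is to control $\mathbb{P}(O_n^c)$ by first isolating a few \emph{global} bad events whose removal forces the increments of $k\mapsto L_n(k)$ to grow at a guaranteed rate, and then invoking a concentration inequality for those increments. I would introduce the good event $\mathcal{G}:=\Ethreen\cap G_2\cap G_3$, where $\Ethreen$ is the event of \eqref{eq:pgn}, $G_2$ is the event that the lower bound of Lemma~\ref{lemma:Lna_lower} holds, and $G_3$ the event that the block estimate of Lemma~\ref{lemma:block} holds. Writing $\mathbb{P}(O_n^c)\le\mathbb{P}(\mathcal{G}^c)+\mathbb{P}(O_n^c\cap\mathcal{G})$, the first term is immediate: by \eqref{eq:pgn}, Lemma~\ref{lemma:Lna_lower} and Lemma~\ref{lemma:block} each of the three events fails with probability at most a constant times an exponentially small factor, so a union bound yields $\mathbb{P}(\mathcal{G}^c)\le A e^{-Bn}$ with $A=\max\{\cCorEthree,\cLnalowera,\cBlocka\}$ and $B=\min\{\cCorEthreeb,\cLnalowerb,\cBlockb\}$, which is exactly the first error term in \eqref{eq:thmOn}.

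For the second term I would exploit that $k\mapsto L_n(k)$ is nondecreasing with unit increments: since $\bm{Z}^{(k)}$ is obtained from $\bm{Z}^{(k+1)}$ by deleting the letter inserted at step $k+1$, one has $\Delta_k:=L_n(k+1)-L_n(k)\in\{0,1\}$ and $L_n(j)-L_n(i)=\sum_{k=i}^{j-1}\Delta_k$. Let $(\mathcal{F}_k)_k$ be the filtration generated by $\bm{Y}^{(n)}$ together with $(U_l,T_l)_{l\le k}$, so that $L_n(k)$ is $\mathcal{F}_k$-measurable and $M_k:=\sum_{l=i}^{k-1}(\Delta_l-\mathbb{E}[\Delta_l\mid\mathcal{F}_l])$ is a martingale whose differences lie in an interval of length one. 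The decisive input, which is the content of Lemma~\ref{lemma:On_intersection}, is a pointwise lower bound $\mathbb{E}[\Delta_l\mid\mathcal{F}_l]\ge\frac1m\,\bm{1}_{D_l}$, where $D_l\in\mathcal{F}_l$ is the event that the current alignment still leaves an unmatched ``slot'' that an inserted letter could fill, the factor $1/m$ reflecting that the inserted value $U_{l+1}$ is uniform on an alphabet of size $m$. The three global events are designed precisely so that $\mathcal{G}\subseteq\bigcap_l D_l$ over the relevant range: the near-linear growth encoded in $\Ethreen$ keeps $L_n(l)$ strictly below both $l$ and $n$, while Lemma~\ref{lemma:block} supplies matchable letters in $\bm{Y}^{(n)}$. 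Hence, by the Doob decomposition, on $\mathcal{G}$ the predictable part satisfies $\sum_{l=i}^{j-1}\mathbb{E}[\Delta_l\mid\mathcal{F}_l]\ge (j-i)/m$.

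Choosing $K\le 1/2m$ as in Lemma~\ref{lemma:On_intersection}, on $\mathcal{G}$ we then get $L_n(j)-L_n(i)=M_j+\sum_{l=i}^{j-1}\mathbb{E}[\Delta_l\mid\mathcal{F}_l]\ge M_j+2K(j-i)$, so that $\{L_n(j)-L_n(i)<K(j-i)\}\cap\mathcal{G}\subseteq\{M_j<-K(j-i)\}$. The range-one Azuma--Hoeffding inequality then gives $\mathbb{P}(M_j<-K(j-i))\le\exp(-2K^2(j-i)^2/(j-i))=\exp(-2K^2(j-i))\le\exp(-2K^2h(n))$, using $j-i\ge h(n)$. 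Finally I would take a union bound over the pairs $(i,j)$ with $i,j\in I$ and $j\ge i+h(n)$; since $|I|\le 2\sqrt{np(1-p)}$ and $4p(1-p)\le 1$, the number of such pairs is at most $n$, producing the term $n e^{-2K^2h(n)}$ and completing \eqref{eq:thmOn}.

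The concentration step and the union bound are routine; the real work lies in the geometric estimate behind Lemma~\ref{lemma:On_intersection}, namely that whenever $D_l$ holds, a uniformly random letter inserted at a uniformly random position lifts the longest common subsequence with conditional probability at least $1/m$. Proving this requires analyzing how an optimal alignment of $\bm{Z}^{(l)}$ with $\bm{Y}^{(n)}$ reacts to a single insertion and exhibiting, with probability bounded below, a letter of $\bm{Y}^{(n)}$ not already used by the alignment that the inserted letter can match; this is where the block/run structure of $\bm{Y}^{(n)}$ and the strict sublinearity of $L_n$ (so that neither word is exhausted) are needed, and where one must check that the bound genuinely survives the conditioning on $\mathcal{F}_l$. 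I expect this step, together with establishing the three global estimates $\Ethreen$, Lemma~\ref{lemma:Lna_lower} and Lemma~\ref{lemma:block} with truly exponential rates, to be the main obstacle.
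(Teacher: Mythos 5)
Your overall skeleton---three global good events, a union bound giving $Ae^{-Bn}$, then a concentration bound for the increments and a union bound over pairs in $I$---is indeed the paper's architecture, and your Azuma--Hoeffding formulation would be a legitimate substitute for the paper's application of Hoeffding to the modified process $\tilde L_n$. But the pivotal drift claim is wrong as stated. You assert $\mathbb{E}[\Delta_l\mid\mathcal{F}_l]\ge\frac{1}{m}\bm{1}_{D_l}$, where $D_l$ is the event that the current optimal alignment ``still leaves an unmatched slot''. A single such slot cannot give a constant drift: the insertion position $T_{l+1}$ is uniform over $l-1$ positions, so one available slot yields only $\mathbb{E}[\Delta_l\mid\mathcal{F}_l]\ge\frac{1}{m(l-1)}$, which vanishes when $l$ is of order $n$. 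This is exactly what \eqref{eqn:lowerbd} records: the conditional probability of an increment is at least $\frac{1}{m}\cdot(\text{number of nonempty matches})/k$, so a drift bounded below requires every $(\pi,\eta)\in M_{min}^k$ to have at least $\lambda n$ nonempty matches---the event $F_{k}^{(n)}$ of \eqref{eq:fk}---and the resulting drift is only $\lambda/m$ with $\lambda=\xi_m\nu\epsilon/(2(D-1))$, which is tiny. Accordingly $K$ must be taken equal to $\lambda/(2m)$ (as in Lemma~\ref{lemma:On_intersection}), not merely $K\le 1/(2m)$; your step ``drift $\ge 2K$'' is unobtainable with the claimed drift $1/m$.

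Second, the inclusion $\mathcal{G}\subseteq\bigcap_l D_l$---in corrected form, $\Ethreen\cap\Efourn\cap\Efiven\subseteq\bigcap_{k\ge\nu n}F_{k}^{(n)}$---is the mathematical core of Theorem~\ref{theorem:On}, and you assert it rather than prove it; the justification you sketch (that $\Ethreen$ ``keeps $L_n(l)$ strictly below $l$ and $n$'') is not what $\Ethreen$ says, and you misattribute the geometric work to Lemma~\ref{lemma:On_intersection}, which is only the concentration step. In the paper this inclusion occupies Lemma~\ref{lemma:L_choose_delta} and Lemma~\ref{lemma:containing}: first a contradiction argument (if some $(\pi,\eta)\in M_{min}^k$ had fewer than a proportion $\epsilon$ of unmatched letters, a segment of $\bm{Y}^{(n)}$ would be nearly a subsequence of $\bm{Z}^{(k)}$, and on $\Ethreen$ this forces $L_n(k)/k\le 1/(m(1-\delta(\epsilon)))<\xi_m$, contradicting $\Efourn$), then a compartment count (by minimality each match contains unmatched letters from at most one compartment, and $\Efiven$ caps the letters lying in compartments of length $\ge D$, so at least $\epsilon\xi_m\nu n/2$ unmatched letters spread at most $D-1$ per match, producing at least $\lambda n$ nonempty matches). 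Minimality of the alignment, i.e.\ working with $M_{min}^k$ rather than $M^k$, is indispensable here and absent from your outline. Finally, your good events control only $k\in[\nu n,n]$: all three lemmas you invoke are intersections over that range. Since $I$ is centered at $np$ while $\nu<1/m$ is fixed, for small $p$ the interval $I$ lies partly or wholly inside $[0,\nu n)$, where your argument provides no drift bound at all. The paper covers that range with the separate event $E^{(n)}$ of Lemma~\ref{lemma:ksmall} (with high probability $L_n(k)=k$ for all $k\le\nu n$, so the slope there is exactly $1\ge K$) and the decomposition $\mathbb{P}(O_n^c)\le\mathbb{P}(O_n^c\cap E^{(n)}\cap F^{(n)})+\mathbb{P}((F^{(n)})^c)+\mathbb{P}((E^{(n)})^c)$; this piece is missing from your proposal.
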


Now with the help of Theorem \ref{theorem:On} we can provide the proof of our main result stated in Theorem~\ref{theorem:main}. 

\begin{proof}[Proof of Theorem~\ref{theorem:main}]
By \eqref{eq:eqMr}, it is sufficient to prove the lower bound for $\Var(L_n(n-N))$.
First as in \cite{houdre_order_2016}, with its notation, 
\begin{align}
 \Var(U|V) &\le  2^2\left(
\mathbb{E}(\left.(U-\mathbb{E}U)^2 \,\middle|\, V\right.)/2+
\mathbb{E}(\left.(\mathbb{E}(U|V)-\mathbb{E}U)^2 \,\middle|\, V\right.)/2
\right)\notag\\
&\le  2^2\mathbb{E}(\left.(U-\mathbb{E}U)^2 \,\middle|\, V\right.), \label{eq:cond_ineq}
\end{align}
and so, for any $n\ge 1$,
\begin{align}
\Var(L_n(n-N)) &\ge \frac{1}{2^2}\mathbb{E}\left(
\Var(L_n(n-N) \,|\, (L_n(n-k))_{0\le k\le n})
\right)
\notag\\
&=\frac{1}{2^2}\int_{\Omega} 
\Var(L_n(n-N) \,|\, (L_n(n-k))_{0\le k\le n}(\omega))\mathbb{P}(d\omega)
\notag\\
&\ge \frac{1}{2^2}\int_{O_n} 
\Var(L_n(n-N) \,|\, (L_n(n-k))_{0\le k\le n}(\omega))\mathbb{P}(d\omega).
\label{E:lowerbound1}
\end{align}
Since $N$ is independent of $(L_n(n-k))_{0 \le k \le n}$, and
from \eqref{eq:cond_ineq}, for each $\omega \in \Omega$,
\begin{align}\label{E:lowerbound2}
&\Var(L_n(n-N)|(L_n(n-k))_{0\le k \le n}(\omega))\notag\\
& \ge \Var(L_n(n-N)|(\!L_n(n-k))_{0\le k \le n}(\omega),
\mathbf{1}_{N\in I}=1)\mathbb{P}(N \in I | (L_n(n-k))_{0\le k \le n}(\omega))\notag\\
& = \Var(L_n(n-N)|(L_n(n-k))_{0\le k \le n}(\omega),
\mathbf{1}_{N\in I}=1)\mathbb{P}(N \in I),
\end{align}
where again, 
\begin{equation*}\label{eq:interval}
I=\left[np -\sqrt{n(1-p)p },np +\sqrt{n(1-p)p }\right].
\end{equation*}
Again, for each $\omega \in O_n$, from Lemma \ref{lemma:var_lowerbd}, and since $N$ is independent of $(L_n(n-k))_{0 \le k \le n}$,
\begin{multline}\label{E:lowerbound5-new}
 \Var(L_n(n-N)|(L_n(n-k))_{0\le k \le n} (\omega), \mathbf{1}_{N\in I }=1)
 \ge   \frac{K^2}{8} \left(\Var(N| \mathbf{1}_{N \in I }=1)-h(n)^2\right).
\end{multline}
Now, \eqref{E:lowerbound1}, \eqref{E:lowerbound2} and \eqref{E:lowerbound5-new} give
\begin{equation}\label{E:lowerbound4-new}
\Var(L_n(n-N))
\ge \frac{K^2}{8}\left(\Var(N|
\mathbf{1}_{N \in I }=1)-h(n)^2\right)\mathbb{P}(N \in I) \mathbb{P}(O_n),
\end{equation}
and it remains to estimate each one of the three terms on the right hand side of
\eqref{E:lowerbound4-new}. 
By the Berry-Ess\'een inequality, for all $n\ge 1$,
\begin{equation}\label{LCS:BE}
\left|\mathbb{P}(N \in I)- \frac{1}{\sqrt{2 \pi}}
\int_{-1}^1 e^{-\frac{x^2}{2}} dx\right|\le \frac{1}{\sqrt{np(1-p)}}.
\end{equation}
Moreover,
\begin{align}\label{condition1}
\Var(N|\mathbf{1}_{N \in I }=1)\notag
&= \mathbb{E}(\left( N-np+np-\mathbb{E}(N|\mathbf{1}_{N \in I }=1)\right)^2 | \mathbf{1}_{N \in I }=1)\notag\\
&\ge \left(\mathbb{E}((N-np)^2 | \mathbf{1}_{N \in I }=1)^{1/2}-|np-\mathbb{E}(N|\mathbf{1}_{N \in I }=1)|\right)^2,
\end{align}
and
\begin{align}
&|\mathbb{E}(N|\mathbf{1}_{N \in I }=1)-np| \notag\\
&\hspace{.5in} =\sqrt{np(1-p)}\left|\mathbb{E}\left(\frac{N-np}
{\sqrt{np(1-p)}}\Big| \mathbf{1}_{N \in I }=1 \right)\right|\notag\\
&\hspace{.5in} = \sqrt{np(1-p)}\frac{\left|\! F_n(1)-\Phi(1)+F_n(-1)-\Phi(-1)-\int_{-1}^{1}(F_n(x)-\Phi(x))dx\right|}{\mathbb{P}(N \in I)}\notag\\
&\hspace{.5in} \le \sqrt{np(1-p)} \frac{4 \max_{x \in [-1,1]}|F_n(x)-\Phi(x)|}
{\mathbb{P}(N \in I)}\notag\\
& \hspace{.5in} \le  \frac{2}{\int_{-1}^1 e^{-\frac{x^2}{2}}dx/{\sqrt{2 \pi}} - 1/{\sqrt{np(1-p)}}},\label{condition2}
\end{align}
where $F_n$ is the distribution functions of ${(N-np)}/{\sqrt{np(1-p)}}$, while
$\Phi$ is the standard normal one. Likewise,
\begin{align}
&\mathbb{E}(|N-np|^2 | \mathbf{1}_{N \in I }=1) \notag\\
&\hspace{.5in} \ge (n p(1-p))\frac{\int_{-1}^1 |x|^2 d\Phi(x) -4\max_{x \in [-1,1]}|F_n(x)-\Phi(x)|}{\mathbb{P}(N \in I)}   \notag\\
&\hspace{.5in} \ge (np(1-p)) \frac{\int_{-1}^1 |x|^2 e^{-\frac{x^2}{2}}dx-2\sqrt{2\pi}/\sqrt{np(1-p)}}{\int_{-1}^1 e^{-\frac{x^2}{2}} dx+\sqrt{2\pi}/\sqrt{np(1-p)}}.\label{condition3}
\end{align}
Next, using \eqref{condition1} -- \eqref{condition3},
\begin{align}
&\Var(N|\mathbf{1}_{N \in I }=1) \notag\\
& \hspace{.5 in} \ge \Bigg| (np(1-p))^{\frac{1}{2}}\left( \frac{\int_{-1}^1 |x|^2 e^{-\frac{x^2}{2}}dx-2\sqrt{2\pi}/\sqrt{np(1-p)}}{\int_{-1}^1 e^{-\frac{x^2}{2}} dx+
\sqrt{2\pi}/\sqrt{np(1-p)}}\right)^{\frac{1}{2}}
\notag\\&\quad \quad \quad \quad\quad\quad\quad\quad\quad\quad\quad\quad\quad 
-\frac{2}{\int_{-1}^1
e^{-\frac{x^2}{2}}dx/{\sqrt{2 \pi}} - 1/{\sqrt{np(1-p)}}}\Bigg|^2.\label{E:lowerbound7}
\end{align}
Finally, the
estimates \eqref{E:lowerbound4-new}-\eqref{E:lowerbound7} combined with
the estimate on $\mathbb{P}(O_n)$ obtained in Theorem~\ref{theorem:On} give
the lower bound in Theorem \ref{theorem:main}, whenever $2\ln n /K^2\le h(n)
\le K_1\sqrt{n} $, where the upper bound on $h(n)$ stems from the requirement that the right hand side of \eqref{E:lowerbound4-new} needs to be lower bounded and where $K_1$ is estimated in Section~\ref{sec:constants}.






\end{proof}

\section{Proof of Theorem~\ref{theorem:On}}
\label{sec:On}
In this section, we prove the aforementioned theorem, therefore completing our proof of Theorem~\ref{theorem:main}. Before doing so, we will need to state a few definitions and set some notations used throughout the rest of the paper:

The sequences $\bm{Z}^{(k)}$ and $\bm{Y}^{(n)}$ are said to have a common subsequence of length $\ell$ if there exist increasing functions $\pi:[1,\ell] \to [1, k]$ and $\eta: [1,\ell] \to [1, n]$ such that 
 $$ Z^k_{\pi(i)} = Y_{\eta(i)}, \hspace{.5em} i = 1, 2, \ldots, \ell,$$
and $(\pi, \eta)$ is then called a pair of matching subsequences of $\bm{Z}^{(k)}$ and $\bm{Y}^{(n)}$. Also, throughout,  $M^k$ denotes the set of pairs of matching subsequences of $\bm{Z}^{(k)}$ and $\bm{Y}^{(n)}$ of maximal length. 


Following the approach in \cite{bonetto_fluctuations_2006}, the proof of Theorem \ref{theorem:On} is then divided into two cases, $k < \nu n$ and $k \ge \nu n$, where in each case $\nu < 1/m$.

\subsection{$k < \nu n$ ($\nu < 1/m$)}
\label{sec:klnun}
We begin with the simpler case $k < \nu n$. In this situation, we show that with high probability all the letters of $\bm{Z}^{(k)}$ are matched with letters of $\bm{Y}^{(n)}$. Let
$$E_{k}^{(n)} := \{ L_n(k) = k \}.$$
Then clearly, $E_{k}^{(n)}\subset E_{k-1}^{(n)} \subset \cdots \subset E_{1}^{(n)}$, and so 
$$E^{(n)}   := \bigcap_{k=1}^{\nu n} E_{k}^{(n)}=E_{\nu n}^{(n)} = \{ L_n(k+1) - L_n(k) = 1 , \forall \; k < \nu n\}. $$


\begin{lemma}
\label{lemma:ksmall}
For $\nu < 1/m$, there exists a constant $\cksmalla = \cksmalla(\nu,m)> 0$ such that,
$$\mathbb{P}(L_n(\nu n) = \nu n) \ge 1 - \exp(- \cksmalla n).$$
\end{lemma}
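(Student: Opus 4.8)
The plan is to reduce the statement to an upper-tail large-deviation estimate for a sum of independent geometric random variables. Write $k=\lfloor \nu n\rfloor$. The event $\{L_n(k)=k\}$ is exactly the event that the \emph{whole} word $\bm{Z}^{(k)}$, which has length $k$, embeds as a subsequence of $\bm{Y}^{(n)}$. I would invoke the optimality of greedy left-to-right matching for subsequence embedding: scanning $\bm{Y}^{(n)}$ from the left, match $Z_1^k$ to the first position of $\bm{Y}^{(n)}$ carrying that letter, then match $Z_2^k$ to the first subsequent position carrying $Z_2^k$, and so on; if every step succeeds within the first $n$ letters of $\bm{Y}^{(n)}$ then $\bm{Z}^{(k)}$ embeds, and conversely if $\bm{Z}^{(k)}$ embeds at all then greedy succeeds. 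Hence $\{L_n(k)=k\}$ coincides with the event that greedy reaches the end of $\bm{Z}^{(k)}$ while consuming at most $n$ letters of $\bm{Y}^{(n)}$.

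Next I would introduce the geometric waiting times. Condition on $\bm{Z}^{(k)}=z$, which is independent of $\bm{Y}^{(n)}$. Let $G_i$ be the number of letters of $\bm{Y}^{(n)}$ consumed to match $z_i$, i.e.\ the gap between the position matching $z_{i-1}$ (or $0$ when $i=1$) and the position matching $z_i$. Because the letters of $\bm{Y}^{(n)}$ are i.i.d.\ uniform on $\{\alpha_1,\ldots,\alpha_m\}$ and each gap is read off a fresh, previously unexamined block of $\bm{Y}^{(n)}$, the $G_i$ are i.i.d., each geometric with success probability $1/m$ (so $\mathbb{E}G_i=m$), regardless of the sought letter $z_i$. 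This conditional law does not depend on $z$, so, setting $S_k:=\sum_{i=1}^k G_i$,
$$\mathbb{P}(L_n(k)=k)=\mathbb{P}(S_k\le n).$$

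Finally I would apply a Chernoff bound. Since $\mathbb{E}S_k=mk=m\lfloor\nu n\rfloor$ and $\nu<1/m$, the threshold $n$ lies strictly above the mean, with $n/k\to 1/\nu>m$. Thus $\{S_k>n\}$ is an upper-tail large-deviation event for a sum of i.i.d.\ geometrics, and
$$\mathbb{P}(S_k> n)\le \inf_{t>0}e^{-tn}\bigl(\mathbb{E}e^{tG_1}\bigr)^{k}=\exp\bigl(-k\,\Lambda^*(n/k)\bigr),$$
where $\Lambda^*$ is the Legendre transform of the geometric log-moment generating function; since $n/k>m=\mathbb{E}G_1$ one has $\Lambda^*(n/k)>0$. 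Taking $\cksmalla=\cksmalla(\nu,m)$ slightly below $\nu\,\Lambda^*(1/\nu)$ (to absorb the floor $\lfloor\nu n\rfloor$ and to cover the finitely many small $n$, for which the probability is positive anyway) gives $\mathbb{P}(S_k>n)\le \exp(-\cksmalla n)$, and combining with the previous display yields $\mathbb{P}(L_n(\nu n)=\nu n)\ge 1-\exp(-\cksmalla n)$.

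The only delicate step is the second paragraph: one must check carefully that, conditionally on $\bm{Z}^{(k)}$, the successive waiting times are genuinely independent and geometric. This rests on the memorylessness of the i.i.d.\ sequence $\bm{Y}^{(n)}$ together with the fact that greedy matching only ever examines $\bm{Y}^{(n)}$ to the right of the last matched position, so distinct waiting times use disjoint, fresh letters. Everything else---optimality of greedy, the explicit Chernoff computation, and the bookkeeping for the floor and for small $n$---is routine.
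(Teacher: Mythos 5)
Your proposal is correct and follows essentially the same route as the paper: greedy left-to-right matching of $\bm{Z}^{(k)}$ into $\bm{Y}$, identification of the waiting times as i.i.d.\ geometric random variables with parameter $1/m$, and a Chernoff bound on the upper tail of their sum, valid precisely because $\nu<1/m$ puts $n$ above the mean $m\nu n$. The only difference is cosmetic: the paper computes the optimal exponent explicitly (obtaining $\cksmalla=\ln\left(m(m-1)^{\nu-1}\nu^{\nu}/(1-\nu)^{\nu-1}\right)$), while you invoke the abstract positivity of the Legendre transform and are somewhat more careful about conditioning on $\bm{Z}^{(k)}$ and integer-part issues.
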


\begin{proof}
We construct a pair of matching sequence $(\pi, \eta)$ for $\bm{Z}^{(k)} = Z^k_1 Z^k_2 \cdots Z^k_{k}$ and $\bm{Y}$ as follows, 
$$
\begin{cases}
\pi(i) = i, \\
\eta(i) = \min \{\ell:\,\ell > \eta(i-1),Y_{\ell} = Z_i^k\},
\end{cases}
\qquad
\text{for $i\ge 1$, }
$$
where we also set $\eta(0)=0$.

Thus, $\eta(i)$ is the smallest index $\ell$ such that $Z_1^k\cdots Z_i^k$ is a subsequence of $Y_1Y_2\cdots Y_{\ell}$. In this way, $\eta(1), \eta(2), \eta(3), \cdots $ is a renewal process with geometrically distributed holding time, i.e., denoting the inter arrival times as 
$$T_i = \eta(i) - \eta(i-1),$$
then $\{T_i\}_{i \ge 1}$ is a sequence of independent geometric random variables with parameter $1/m$, i.e., 
\begin{equation*}
\mathbb{P}(T_i=t) = \frac{1}{m}\left(\frac{m-1}{m}\right)^{t-1},\; t = 1, 2, 3, \cdots.
\end{equation*}
Thus, $\mathbb{E}T_i = m$. Next,
$$\mathbb{P}(L_n(\nu n) = \nu n) 
\ge \mathbb{P}\left( \sum_{i = 1}^{\nu n}T_i < n \right)
= 1 - \mathbb{P}\left( \sum_{i = 1}^{\nu n}\left(T_i -\frac{1}{\nu}\right)\ge 0 \right),$$
and from the independence of the $\{T_i\}_{i \ge 1}$, 
\begin{align*}
\mathbb{P}\left( \sum_{i = 1}^{\nu n}\left(T_i -\frac{1}{\nu}\right)\ge 0 \right) 
& \le \inf_{s > 0} \mathbb{E} \left( e^{s\sum_{i = 1}^{\nu n}\left(T_i -\frac{1}{\nu}\right)} \right)\\
& = \inf_{s > 0} \left( \mathbb{E}  e^{s (T_0 - 1/\nu)} \right)^{\nu n} \\
& =  \inf_{s > 0}  e^{-ns} \left( \frac{e^s}{m - (m-1)e^s} \right)^{\nu n}.
\end{align*}

This last term is minimized at 
$$s = \ln  \frac{m(1 - \nu)}{m-1}, $$

thus, 
\begin{equation*}
\inf_{ s > 0}   e^{-s} \left( \frac{e^s}{m - (m-1)e^s} \right)^{\nu }  
 =  \frac{(1 - \nu)^{\nu -1}}{m(m-1)^{\nu -1 } \nu^{\nu}},
\end{equation*}
which is increasing in $\nu$ for $\nu\in(0, 1- 1/m)$. Thus,
$$\frac{(1 - \nu)^{\nu -1}}{m(m-1)^{\nu -1 } \nu^{\nu}} 
\begin{cases}
 > 1 & \text{when $\nu \in (1/m, 1 - 1/m) $} \\
 = 1 & \text{when $\nu = 1/m$} \\
 < 1 & \text{when $\nu < 1/m$.}
\end{cases}$$

Since $\nu<1/m$, by taking $\cksmalla=\ln\left({m(m-1)^{\nu -1 } \nu^{\nu}}/{(1 - \nu)^{\nu -1}}\right)$, we have
$$\mathbb{P}(E_{\nu n}^{(n)})=\mathbb{P}(L_n(\nu n) = \nu n)
\ge 1 - \exp(- \cksmalla n).$$

\end{proof}

Therefore, Lemma~\ref{lemma:ksmall} asserts that
$$\mathbb{P}(E^{(n)})=\mathbb{P}(E_{\nu n}^{(n)}) \ge 1 - \exp(- \cksmalla n).$$

\subsection{$k \ge \nu n$ ($\nu < 1/m$)}
\label{sec:kgnun}
To continue, we introduce some more definitions and notations of use throughout the section. 

\begin{enumerate}[(i)]
\item Let $\le$ denote the partial order between two increasing functions $\pi_1, \pi_2: [1, \ell] \to \mathbb{N}$, i.e., $\pi_1 \le \pi_2$ if for every $i \in [1, \ell]$, $\pi_1(i) \le \pi_2(i)$. Further $(\pi_1, \eta_1) \le (\pi_2, \eta_2)$ is short for $\pi_1 \le \pi_2$ and $\eta_1 \le \eta_2$. 

\item Let $M_{min}^k \subset M^k$ be the set of $(\pi, \eta) \in M^k$ which are minimal for the relation $\le$, i.e., such that for  $(\pi_1,\eta_1)\in M_{min}^k$ and $(\pi_2,\eta_2)\in M^k$, if $(\pi_1,\eta_1)\ge(\pi_2,\eta_2)$ then $(\pi_1,\eta_1)=(\pi_2,\eta_2)$.


\item If $(\pi, \eta)$ is a pair of matching subsequences of $\bm{Z}^{(k)}$ and $\bm{Y}^{(n)}$ of length $\ell$, a match of $(\pi, \eta)$ is then defined to be the quadruple $$\left( \pi(i), \pi(i+1), \eta(i), \eta(i+1) \right).$$
Moreover, if $\eta(i) + 2 \le \eta(i+1)$, the match is said to be non-empty. Therefore, for a non-empty match, there exists $j$, such that $\eta(i) < j < \eta(i+1)$ and $Y_j = \alpha$ for some $\alpha\in\mathcal{A}\setminus\{\alpha_{m+1}\}$. In that case, the match is said to contain an $\alpha$, and $Y_j$ is called an unmatched letter of the match $\left( \pi(i), \pi(i+1), \eta(i), \eta(i+1) \right)$. 

\item The sequence $\bm{Y}^{(n)}$ can be uniquely divided into $d$ compartments $[j_1,j_2-1], [j_2,j_3-1],\ldots,[j_d,n]$, where $1=j_1<j_2<\cdots <j_d\le n$ are determined by the following recursive relations:
\begin{equation*}
\begin{cases}
j_1=1\\
j_i = \min(n+1, \{s\in[j_{i-1} +1,n]:\,\text{$Y_{j_{i-1}}Y_{j_{i-1} +1}\cdots Y_{s}$ contains $m$ distinct letters} \}), 
\end{cases},
\end{equation*}
and $d=\max\{i:\,j_i\le n\}$.


\end{enumerate}

To get a lower bound on the probability that the length of the longest common subsequence increases by one, we recall the construction of $\bm{Z}^{(k)}$ and note that there are $(k-1)$ possible positions for the letter $U_{k+1}$ to be inserted. Therefore, $U_{k+1}$ falls into a non-empty match with probability at least ${(\text{number of nonempty matches of } (\pi, \eta))}/{(k-1)}\ge{(\text{number of nonempty matches of } (\pi, \eta))}/{k}$.   For each non-empty match, there is at least one unmatched letter, and the probability that $U_{k+1}$ takes the same value as the unmatched letter is $1/m$, resulting in the following lower bound for $(\pi, \eta) \in M^k$: 
\begin{equation}
\label{eqn:lowerbd}
\mathbb{P}\left( L_n(k+1) - L_n(k) = 1 | \bm{Z}^{(k)}, \bm{Y}^{(n)} \right) \ge \frac{1}{m} \frac{\text{number of nonempty matches of } (\pi, \eta)}{k }.
\end{equation}

Therefore, a good estimate on the number of nonempty matches of $(\pi, \eta)$ will provide a lower bound on the probability that $LC_n$ increases by one.

Next we give the main ideas behind the proof that, with high probability, the map $k \to L(k)$ is linearly increasing on $[\nu n, n]$. We use the letter-insertion scheme, described above, to prove that the random map $k\to L(k)$ typically has positive drift $\lambda$ (which will be determined later in Lemma~\ref{lemma:containing}). To do so, let 
\begin{equation}
F_{k}^{(n)} := \{ (\pi, \eta) \in M_{min}^k \text{ such that the number of nonempty matches of } (\pi, \eta) \text{ is at least }  \lambda n\}, \label{eq:fk}
\end{equation}
and let
$$F^{(n)} := \bigcap_{k = \nu n}^n F_{k}^{(n)}.$$

When $F_{k}^{(n)}$ holds, every pair of $(\pi, \eta) \in M_{min}^k$ has at least $\lambda n$ nonempty matches. Hence the number of non-empty matches divided by $k$ is larger than or equal to $\lambda n/k$. It follows from \eqref{eqn:lowerbd} that when $F_{k}^{(n)}$ holds, 
\begin{equation}
\mathbb{P}(L_n(k+1) - L_n(k) = 1 | \bm{Z}^{(k)}, \bm{Y}^{(n)}) \ge \frac{1}{m} \frac{\lambda n}{k} 
\ge \frac{\lambda}{m}>0.
\label{lb}
\end{equation}

The inequality \eqref{lb} implies that when $F^{(n)}$ holds, the map $k \to L_n(k)$ has drift at least $\lambda /m$ for $k \in [ \nu n, n]$. In other words, whenever $F^{(n)}$ holds, with high probability $k \to L_n(k)$ has positive slope on $[\nu n, n]$. 

It remains to show that, by concentration, $F^{(n)}$ holds with high probability, and this is proved by contradiction. Indeed if all the matches of $(\pi, \eta ) \in M^k $ were empty, then the following two conditions would hold:
\begin{enumerate}[(1)]
\item $(\eta(1), \eta(2), \eta(3), \cdots, \eta(\ell
) ) = (\eta(1), \eta(1)+1, \eta(1)+2, \cdots, \eta(1) + \ell-1  )$ where $\ell$ is the length of the LCS of $\bm{Z}^{(k)}$ and $\bm{Y}^{(n)}$, i.e., $\ell = L_n(k)$. 
\item The sequence 
$$Y_{\eta(1)} Y_{\eta(2)} \cdots Y_{\eta(\ell)} = Y_{\eta(1)} Y_{\eta(1)+1} \cdots Y_{\eta(1)+\ell -1}$$
would be a subsequence of 
$$Z_{\pi(1)}^k Z_{\pi(1) +1}^k \cdots Z_{\pi(\ell)}^k.$$
\end{enumerate}

Above, we have two independent sequences of i.i.d.\ uniform random variables with parameter $1/m$, where one is contained in the other as a subsequence. Thus, the longer one must approximately be at least $m$ times as long as the shorter one, hence $k$ is approximately at least $m$ times as long as $\ell = L_n(k)$. As a result, the ratio $L_n(k)/k$ is to be at most $1/m$, which is very unlikely (Lemma~\ref{lemma:Lna_lower}), leading to contradiction. \label{pg:contradiction}

From the previous arguments, it follows that with high probability any $(\pi, \eta) \in M_{min}^k$ contains a non-vanishing proportion $\epsilon >0$ of unmatched letters, hence 
$(\eta(L_n(k))-L_n(k))/\eta(L_n(k)) \ge \epsilon$, where $\eta(L_n(k))$ is the index of the last matching letter in $\bm{Y}^{(n)}$ of the match $(\pi,\eta)$. We then show that this proportion $\epsilon$ of unmatched letters generates sufficiently many non-empty matches, i.e., that the unmatched letters should not be concentrated on a too small number of matches. 

To prove that there are more than $\lambda n$ nonempty matches, the following two arguments are used:
\begin{enumerate}[(1)]
\item Any $(\pi, \eta)\in M_{min}^k$ is such that every match of $(\pi, \eta)$ contains unmatched letters from at most one compartment of $\bm{Y}^{(n)}$.  
\item There exists a $D>0$, not depending on $n$, such that, with high probability, the total number of integer points contained in the compartments of $\bm{Y}^{(n)}$ of length larger than $D$, is small. 
\end{enumerate}

Henceforth, for $(\pi, \eta) \in M_{min}^k$ the majority of unmatched letters are at most $D$ per match, ensuring that a proportion $\epsilon$ of unmatched letters implies a proportion of at least $\epsilon/D$ non-empty matches. 

\vspace{\baselineskip}

Let us return to the proof, and let $L_{\ell}(k)$ denote the length of the LCS of $\bm{Z}^{(k)}$ and $\bm{Y}^{(\ell)} = Y_1 \cdots Y_{\ell}$. In order for $\bm{Y}^{(\ell)}$ to be contained in $\bm{Z}^{(k)}$, $k$ needs to be approximately $m$ times as long as $\ell$, and, then, $L_{\ell}(k) = \ell$. Therefore, if $k = m \ell(1-\delta)$, for some $\delta = \delta(\epsilon) > 0$ not depending on $\ell$, then it is extremely unlikely that $\bm{Y}^{(\ell)}$ is a subsequence of $\bm{Z}^{(k)}$, as shown in the forthcoming lemma.

\begin{lemma}
\label{lem:plmldl}
For any $0<\delta<(m -1)/m$ and $\ell \ge 1$, we have
\begin{equation}
\mathbb{P}(L_{\ell}(m\ell(1- \delta)) = \ell) \le  e^{-\cLlaa \delta^2\ell}, 
\label{eq:lem52}
\end{equation}
where $C_2 = m/2(m-1)$. 
\end{lemma}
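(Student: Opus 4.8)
The plan is to reduce the event $\{L_\ell(m\ell(1-\delta)) = \ell\}$ to a lower-tail large-deviation estimate for a sum of independent geometric random variables, reusing the renewal construction that already underlies Lemma~\ref{lemma:ksmall}. First I would note that, since $\bm{Y}^{(\ell)}$ has length $\ell$, the event $\{L_\ell(k) = \ell\}$ is exactly the event that $\bm{Y}^{(\ell)}$ is a subsequence of $\bm{Z}^{(k)}$. Testing subsequence containment by greedy leftmost matching, set $\eta(0)=0$ and $\eta(i) = \min\{s>\eta(i-1):\,Z^k_s = Y_i\}$; then $\bm{Y}^{(\ell)}$ embeds into $\bm{Z}^{(k)}$ if and only if $\eta(\ell)\le k$. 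Because the $Z^k_s$ are i.i.d.\ uniform on an alphabet of size $m$ and independent of $\bm{Y}^{(\ell)}$, the increments $T_i = \eta(i)-\eta(i-1)$ are, by the same memorylessness argument as in Lemma~\ref{lemma:ksmall}, i.i.d.\ geometric with parameter $1/m$ and mean $m$. Hence
\[
\mathbb{P}(L_\ell(m\ell(1-\delta)) = \ell) = \mathbb{P}\!\left(\sum_{i=1}^{\ell} T_i \le m\ell(1-\delta)\right),
\]
a lower-tail deviation of a sum with mean $m\ell$ at relative size $\delta$. Equivalently, writing $S_k$ for the number of successes among the first $k$ i.i.d.\ Bernoulli$(1/m)$ trials, this equals $\mathbb{P}(S_k \ge \ell)$ with $k=m\ell(1-\delta)$; note that the hypothesis $\delta<(m-1)/m$ is precisely what makes $k>\ell$, so the event is nontrivial.

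Next I would apply the Chernoff bound. Using the geometric transform $\mathbb{E}e^{-sT_1} = e^{-s}/(m-(m-1)e^{-s})$ and optimizing over $s>0$ — or, in the binomial picture, using the sharp bound $\mathbb{P}(S_k\ge\ell)\le\exp(-k\,D(a\|b))$ with $a=\ell/k=1/(m(1-\delta))$, $b=1/m$, and $D(a\|b)=a\ln(a/b)+(1-a)\ln((1-a)/(1-b))$ the binary relative entropy — gives in both cases the same exponent $kD(a\|b)$. It then remains to show the quadratic lower bound
\[
k\,D(a\|b) \ge \frac{m}{2(m-1)}\,\delta^2\,\ell = C_2\,\delta^2\,\ell .
\]

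This quadratic lower bound on the rate function is the crux and the step I expect to require the most care. I would exploit the Taylor-with-remainder form of the relative entropy: since $g(x):=D(x\|b)$ satisfies $g(b)=g'(b)=0$ and $g''(x)=1/(x(1-x))$, there is a $\xi$ strictly between $b$ and $a$ with $D(a\|b)=(a-b)^2/(2\xi(1-\xi))$. A short computation gives $a-b=\delta/(m(1-\delta))$ and $k=\ell/a$, so $k(a-b)^2/a^2=\delta^2\ell$ and everything reduces to controlling $\xi(1-\xi)$. When $a\le 1/2$ (that is, $\delta\le (m-2)/m$) monotonicity yields $\xi(1-\xi)<a(1-a)$, whence $kD(a\|b)>\delta^2\ell/(2(1-a))\ge C_2\delta^2\ell$ because $a>1/m$ forces $1-a<(m-1)/m$; for larger $\delta$ one instead uses the universal bound $\xi(1-\xi)\le 1/4$, giving $kD(a\|b)\ge 2\delta^2\ell/(m(1-\delta))$, which exceeds $C_2\delta^2\ell$ as soon as $\delta\ge (m-2)^2/m^2$. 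Since $(m-2)^2/m^2<(m-2)/m$, the two regimes overlap and together cover the whole range $0<\delta<(m-1)/m$, completing the argument. I would finally remark that $C_2=m/(2(m-1))$ is exactly the Gaussian value $\mu^2/(2\sigma^2)$ attached to $T_1$ (mean $\mu=m$, variance $\sigma^2=m(m-1)$), which explains both the form of the constant and why blunter tools such as Hoeffding or Bernstein — adequate for $m=2$ — fall just short of it for $m\ge 3$, making the sharper relative-entropy estimate necessary.
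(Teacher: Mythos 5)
Your reduction and your Chernoff step are exactly the paper's: the paper likewise matches the letters of $\bm{Y}^{(\ell)}$ greedily (into the infinite word $\tilde{\bm{X}}$, invoking Lemma~\ref{lemma:LCn_dist}), obtains i.i.d.\ geometric($1/m$) interarrival times $T_i$, and bounds $\mathbb{P}\left(\sum_{i=1}^\ell T_i\le m\ell(1-\delta)\right)$ by an exponential moment. You part ways at the crux. The paper explicitly computes the optimizing $s$, writes the optimal value $w$ in closed form, and Taylor-expands $\ln w$ in $\delta$ with Lagrange remainder, checking that the cubic remainder term is negative, which yields $\ln w<-C_2\delta^2$ directly. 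You instead pass to the binomial dual $\mathbb{P}(S_k\ge \ell)\le e^{-kD(a\|b)}$ (the two exponents do coincide, as a short computation confirms) and apply Taylor with Lagrange remainder to $x\mapsto D(x\|b)$, so that $D(a\|b)=(a-b)^2/(2\xi(1-\xi))$ with $b<\xi<a$. This is a genuine, and arguably cleaner, variant of the final step: it avoids computing the optimizer and the sign check on a third derivative, and it exhibits $C_2$ as the CLT constant $\mu^2/(2\sigma^2)$ of $T_1$, whereas the paper's route is more self-contained (no appeal to the negative-binomial/binomial duality or to the entropy form of the Chernoff--Hoeffding bound). Two small repairs to your write-up. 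First, the key identity should read $k(a-b)^2/a=\delta^2\ell$, not $k(a-b)^2/a^2=\delta^2\ell$ (indeed $(a-b)/a=\delta$ and $ka=\ell$); your subsequent displays use the correct version, so this is only a typo. Second, your two-case analysis of $\xi(1-\xi)$ is unnecessary: since $b<\xi<a$, both $\xi<a$ and $1-\xi<1-b$ hold, so $\xi(1-\xi)<a(1-b)$ outright, whence
\begin{equation*}
kD(a\|b)=\frac{k(a-b)^2}{2\xi(1-\xi)}>\frac{k(a-b)^2}{2a(1-b)}=\frac{\delta^2\ell}{2(1-b)}=\frac{m}{2(m-1)}\,\delta^2\ell=C_2\,\delta^2\ell,
\end{equation*}
which covers the whole range $0<\delta<(m-1)/m$ in one stroke and even shows that your relative-entropy route recovers the paper's constant with strict inequality.
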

\begin{proof}
The proof is similar to the proof of Lemma~\ref{lemma:ksmall} and some of its notation is used.

First let $\tilde{\bm{X}}:=\tilde{\bm{X}}^{(\infty)}$, be the (infinite) subword of $\bm{X}$ with $\alpha_{m+1}$ removed, and therefore each $\tilde{\bm{X}}^{(n)}$ is a subword of $\tilde{\bm{X}}$. Next, construct a pair of matching sequence $(\pi,\eta)$ for $\tilde{\bm{X}}$ and $\bm{Y}^{(\ell)}$ as follows:
\begin{equation*}
\pi(0)=0,\quad\quad\text{ and for $i\ge 1, $}\quad
\begin{cases}
\pi(i)=\min\{j:\,j>\pi(i-1),\tilde{X}_j=Y_i\}\\
\eta(i)=i.
 \end{cases}
\end{equation*}
Thus, $\pi(i)$ is the smallest index $j$ such that  $Y_1Y_2\cdots Y_{i}$ is a subsequence of $\tilde{X}_1\cdots \tilde{X}_j$. In this way, $\pi(1), \pi(2), \pi(3), \cdots $ is a renewal process with geometrically distributed holding time, i.e., denoting the interarrival times as 
$$T_i = \pi(i) - \pi(i-1),$$
then $\{T_i\}_{i \ge 1}$ is a sequence of independent geometric random variables with parameter $1/m$, i.e., 
\begin{equation*}
\mathbb{P}(T_i=t) = \frac{1}{m}\left(\frac{m-1}{m}\right)^{t-1},\, t = 1, 2, 3, \cdots.
\end{equation*}
Thus, $\mathbb{E}T_i = m$. Then by Lemma~\ref{lemma:LCn_dist} and for $0 < \delta < 1$, we have
\begin{align*}
\mathbb{P}(L_{\ell}(m\ell(1- \delta)) = \ell)
& =
\mathbb{P}\left(\sum_{i=1}^\ell T_i\le m\ell(1-\delta)\right)
=
\mathbb{P}\left(\sum_{i=1}^\ell (m(1-\delta)-T_i)\ge 0 \right) \\
& \le 
\left( \inf_{s>0} \mathbb{E}e^{s(m(1-\delta)-T_1)}\right)^\ell
=
\left(\inf_{s>0}\frac{e^{sm(1-\delta)}}{me^s-(m-1)}\right)^\ell.
\end{align*}
This last term is minimized at 
\begin{equation*}
s = \ln\left(1+\frac{\delta}{m(1-\delta)-1}\right),
\end{equation*}
thus setting,
\begin{equation*}
w: =
\inf_{s>0}\frac{e^{sm(1-\delta)}}{me^s-(m-1)}
=\frac{(m(1-\delta)-1)\left(1+\frac{\delta}{m(1-\delta)-1}\right)^{m(1-\delta)}}{m-1}, 
\end{equation*}
it follows that, 
\begin{equation*}
\mathbb{P}\left(\sum_{i=1}^\ell (m(1-\delta)-T_i)\ge 0 \right)
\le 
e^{(\ln w)\ell}.
\end{equation*}
Now, the Taylor expansion of $\ln w$ with Lagrange remainder gives
\begin{equation*}
\ln w = -\frac{m}{2(m-1)}\delta^2+\frac{1}{6}\left(\frac{m}{(1-\xi)^2}-\frac{m^3}{(m(1-\xi)-1)^2}\right)\delta^3<-\frac{m}{2(m-1)}\delta^2,
\end{equation*}
where $0<\xi<\delta$. Letting $\cLlaa=m/2(m-1)$ finishes the proof.
\end{proof}
Lemma~\ref{lem:plmldl} further entails, as shown next, that for any $0<\epsilon<1$ there exists $\delta(\epsilon)>0$, small, such that $L_{\ell}(m\ell(1- \delta(\epsilon) ) ) \ge \ell(1-\epsilon)$ is also very unlikely.
\begin{lemma}
\label{lemma:Lla}
For any $0<\epsilon<1$ and all $\ell \ge 1$, there exists $\delta(\epsilon)>0$, with $\displaystyle\lim_{\epsilon \to 0} \delta(\epsilon) \to 0$, such that
$$\mathbb{P}\bigl((\Ethreenl)^c\bigr) \le  e^{- \cLlab \ell}, $$
where $\Ethreenl=\{L_{\ell}(m\ell(1- \delta(\epsilon) ) ) < \ell(1-\epsilon)\}$, and where $\cLlab:=(\delta(\epsilon)-\epsilon)^2\cLlaa/2$. Therefore, letting
$$\Ethreen := \bigcap_{\ell = \nu n}^n \Ethreenl,$$
it follows that,
\begin{equation}
\mathbb{P}(\Ethreen) \ge  1 - \sum_{k = \nu n}^n  e^{-\cLlab k} \ge 1 - \frac{1}{1 - e^{-\cLlab }}e^{ - \cLlab\nu n}=  1 - \cCorEthree e^{-\cCorEthreeb n},
\label{eq:pgn}
\end{equation}
where $\cCorEthree = {1}/{(1 - e^{-\cLlab })}$. 
\end{lemma}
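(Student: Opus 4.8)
The plan is to deduce Lemma~\ref{lemma:Lla} from Lemma~\ref{lem:plmldl} via a union bound over subwords of $\bm{Y}^{(\ell)}$, paying an entropy factor that a suitable choice of $\delta(\epsilon)$ will absorb. Write $k:=m\ell(1-\delta(\epsilon))$ and $\ell':=\lceil \ell(1-\epsilon)\rceil$. The starting observation is that the event $(\Ethreenl)^c=\{L_{\ell}(k)\ge \ell(1-\epsilon)\}$ forces the existence of some length-$\ell'$ subword $\bm{Y}_S:=(Y_i)_{i\in S}$, $S\subset\{1,\dots,\ell\}$ with $|S|=\ell'$, that is entirely a subsequence of $\bm{Z}^{(k)}$: any common subsequence of length $\ge\ell'$ selects such a subword by truncation.

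\textbf{Key steps.} First I would fix $S$ and note that, since the $Y_i$ are i.i.d.\ uniform on $\{\alpha_1,\dots,\alpha_m\}$ and independent of $\bm{Z}^{(k)}$, the subword $\bm{Y}_S$ is distributed as a fresh word $\bm{Y}^{(\ell')}$, so that
$$\mathbb{P}\bigl(\bm{Y}_S \text{ is a subsequence of } \bm{Z}^{(k)}\bigr)=\mathbb{P}\bigl(L_{\ell'}(k)=\ell'\bigr).$$
Writing $k=m\ell'(1-\delta')$ with $\delta':=1-\ell(1-\delta(\epsilon))/\ell'$, so that up to the $O(1/\ell)$ rounding in $\ell'$ one has $\delta'=(\delta(\epsilon)-\epsilon)/(1-\epsilon)$, Lemma~\ref{lem:plmldl} applies provided $\delta(\epsilon)>\epsilon$ (to ensure $\delta'>0$) and gives $\mathbb{P}(L_{\ell'}(k)=\ell')\le e^{-\cLlaa(\delta')^2\ell'}$. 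Next I would union bound over the $\binom{\ell}{\ell'}$ choices of $S$ and use the entropy estimate $\binom{\ell}{\ell'}=\binom{\ell}{\ell-\ell'}\le e^{\ell H(\epsilon)}$, where $H(\epsilon)=-\epsilon\ln\epsilon-(1-\epsilon)\ln(1-\epsilon)$, obtaining
$$\mathbb{P}\bigl((\Ethreenl)^c\bigr)\le \binom{\ell}{\ell'}e^{-\cLlaa(\delta')^2\ell'}\le e^{\ell H(\epsilon)-\cLlaa(\delta')^2\ell'}.$$
Substituting $(\delta')^2\ell'=(\delta(\epsilon)-\epsilon)^2\ell/(1-\epsilon)$ turns the exponent into $\ell\bigl[H(\epsilon)-\cLlaa(\delta(\epsilon)-\epsilon)^2/(1-\epsilon)\bigr]$. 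Since $1/(1-\epsilon)-1/2>1/2$ on $(0,1)$, it suffices to pick $\delta(\epsilon)$ so that $H(\epsilon)\le \cLlaa(\delta(\epsilon)-\epsilon)^2/2=\cLlab$. The explicit choice $\delta(\epsilon)=\epsilon+\sqrt{2H(\epsilon)/\cLlaa}$ makes this an equality, is $>\epsilon$, satisfies $\delta(\epsilon)<(m-1)/m$ for $\epsilon$ small, and has $\delta(\epsilon)\to 0$ as $\epsilon\to 0$ because $H(\epsilon)\to 0$; with it the exponent is $\cLlab\bigl(1-2/(1-\epsilon)\bigr)\ell<-\cLlab\ell$, yielding $\mathbb{P}((\Ethreenl)^c)\le e^{-\cLlab\ell}$.

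\textbf{Main obstacle.} The delicate point is balancing the entropy factor $e^{\ell H(\epsilon)}$ from the union bound against the per-subword decay $e^{-\cLlaa(\delta')^2\ell'}$: one must take $\delta(\epsilon)-\epsilon$ large enough to beat $H(\epsilon)$ yet small enough that $\delta(\epsilon)\to 0$, which is exactly why the $\sqrt{H(\epsilon)}\sim\sqrt{\epsilon\ln(1/\epsilon)}$ scaling is needed. A secondary nuisance is the rounding in $\ell'=\lceil\ell(1-\epsilon)\rceil$, which perturbs $\delta'$ by $O(1/\ell)$ and must be checked not to degrade the exponential rate (it does not, since the chosen exponent is strictly below $-\cLlab\ell$).

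\textbf{The $\Ethreen$ bound.} For the final assertion I would union bound over the complements across $\ell$ and sum the resulting geometric series:
$$\mathbb{P}\bigl((\Ethreen)^c\bigr)=\mathbb{P}\Bigl(\bigcup_{\ell=\nu n}^n(\Ethreenl)^c\Bigr)\le \sum_{\ell=\nu n}^n e^{-\cLlab\ell}\le \frac{e^{-\cLlab\nu n}}{1-e^{-\cLlab}}=\cCorEthree\, e^{-\cCorEthreeb n},$$
so that $\mathbb{P}(\Ethreen)\ge 1-\cCorEthree e^{-\cCorEthreeb n}$ with $\cCorEthree=1/(1-e^{-\cLlab})$, which is exactly \eqref{eq:pgn}.
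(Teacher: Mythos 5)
Your proposal is correct and follows essentially the same route as the paper: a union bound over the $\binom{\ell}{\ell(1-\epsilon)}$ index sets $S$, the observation that $\bm{Y}_S$ is a fresh i.i.d.\ uniform word independent of $\bm{Z}^{(k)}$ so that Lemma~\ref{lem:plmldl} applies with effective parameter $\delta'=(\delta(\epsilon)-\epsilon)/(1-\epsilon)$, an entropy bound on the binomial coefficient, and a choice of $\delta(\epsilon)$ that absorbs the entropy, followed by the identical geometric-series step for $\Ethreen$. The only real difference is quantitative: the paper's union bound carries an extra factor $m^{\epsilon\ell}$, so its $\delta(\epsilon)$ in \eqref{eq:deltaepsilon} has the additional term $\epsilon\ln m$ under the square root, whereas your cleaner count permits $\delta(\epsilon)=\epsilon+\sqrt{2\left(-\epsilon\ln\epsilon-(1-\epsilon)\ln(1-\epsilon)\right)/\cLlaa}$; both choices satisfy the lemma as stated (which only asserts existence of some $\delta(\epsilon)\to 0$), and since yours is smaller it would not disturb the later conditions, such as \eqref{numbers}, that the paper imposes on $\delta(\epsilon)$.
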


\begin{proof}

Let $S\subset \{1, 2, \cdots, \ell \}$ have cardinality $(1-\epsilon)\ell$. Clearly, there are $\binom{\ell}{\ell(1-\epsilon)}$ such subsets $S$. Now fixing the values of $\bm{Y}^{(n)}$ at the indices belonging to $S$, there are $m^{\epsilon \ell}$ such 
  $\bm{Y}^{(n)}$ agreeing on $S$. 
Therefore,
 \begin{equation*}
\mathbb{P}\bigl((\Ethreenl)^c\bigr)\le
m^{\epsilon \ell}\binom{\ell}{\ell(1-\epsilon)}\mathbb{P}\bigl(L_{\ell(1-\epsilon)}\left(m\ell\left(1-\delta(\epsilon)\right)\bigr)=
\ell(1-\epsilon)\right).
\end{equation*}
From \eqref{eq:lem52},  
  \begin{align*}
  \mathbb{P}\bigl(L_{\ell(1-\epsilon)}\left(m\ell\left(1-\delta(\epsilon)\right)\bigr)=
\ell(1-\epsilon)\right)
&=
  \mathbb{P}\left(L_{\ell(1-\epsilon)}\left(m\left(1-\frac{\delta(\epsilon)-\epsilon}{1-\epsilon}\right)(1-\epsilon)\ell\right)=\ell(1-\epsilon)\right)\\
  &\le e^{-\cLlaa \left(\frac{\delta(\epsilon)-\epsilon}{1-\epsilon}\right)^2(1-\epsilon)\ell}
  \le
  e^{-\cLlaa (\delta(\epsilon)-\epsilon)^2\ell}.
  \end{align*}
Collecting the above estimates,
\begin{equation}
\mathbb{P}\bigl((\Ethreenl)^c\bigr)\le
m^{\epsilon \ell}\binom{\ell}{\ell(1-\epsilon)}e^{-\cLlaa (\delta(\epsilon)-\epsilon)^2\ell}.
\label{eq:llepsilon}
\end{equation}
Since
\begin{equation*}
\ell^\ell = (\epsilon\ell + (1-\epsilon)\ell)^\ell\ge\binom{\ell}{(1-\epsilon)\ell}(\epsilon\ell)^{\epsilon\ell}((1-\epsilon)\ell)^{(1-\epsilon)\ell},
\end{equation*}
then
\begin{equation*}
\binom{\ell}{(1-\epsilon)\ell}\le 
\left(
\frac{1}{\epsilon^\epsilon(1-\epsilon)^{1-\epsilon}}
\right)^\ell.
\end{equation*}
Therefore, \eqref{eq:llepsilon} becomes
\begin{equation*}
\mathbb{P}\bigl((\Ethreenl)^c\bigr)
\le e^{\left(\epsilon(\ln m-\ln\epsilon)-
(1-\epsilon)\ln (1-\epsilon)-\cLlaa (\delta(\epsilon)-\epsilon)^2\right)\ell},
\end{equation*}
%
and it is enough to choose 
\begin{equation}
\delta(\epsilon)=\epsilon+\sqrt{\frac{2}{\cLlaa}\left(\epsilon(\ln m-\ln\epsilon)-
(1-\epsilon)\ln (1-\epsilon)\right)},
\label{eq:deltaepsilon}
\end{equation} 
to obtain the stated result.
\end{proof}

Lemma~\ref{lemma:Lna_lower} and Lemma~\ref{lemma:L_choose_delta}, presented next, formalize our contradictory argument asserted above. To show that it is very unlikely that ``the ratio $L_n(k)/k$ is at most $1/m$'',
 note, at first, that for $n\ge 2$,
\begin{equation}
    \mathbb{E} L_{n}(n)
    >
    \mathbb{E} \sum_{i=1}^{n}\mathbbm{1}_{\{Y_i=Z^{n}_i\}}
    \ge
    n \mathbb{P}(Y_1=Z^{n}_1) =\frac{n}{m}.
    \label{eq:en0}
\end{equation}
Specifically, when $n=2$, see \cite{chvatal_longest_1975},
\begin{equation}
    \E{L_{2}(2)}/2=\frac{4m^2-5m+3}{2m^3}.
    \label{eq:en02}
\end{equation}
Now, choose $\xi_m$ such that 
\begin{equation}
1/m<\xi_m<\E{L_{2}(2)}/{2},
\label{eq:ELn0bound}
\end{equation}
and let us show that very likely $L_n(k)/k$ is larger than $\xi_m$. To do so, let
$$\Efournk := \{L_n(k) \ge \xi_m k \},$$
and
$$\Efourn := \bigcap_{k = \nu n}^n \Efournk.$$
\begin{lemma}
\label{lemma:Lna_lower}
There exist constants $\cLnalowera, \cLnalowerb > 0$, such that
\begin{equation}
\mathbb{P}(\Efourn) \ge 1 - \cLnalowera e^{- \cLnalowerb n}. 
\label{eq:Lna_lower}
\end{equation}
\end{lemma}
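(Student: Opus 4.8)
The plan is to control each event $\Efournk$ separately and then union bound over $\nu n \le k \le n$. The two ingredients are a lower bound on the mean $\mathbb{E} L_n(k)$ that strictly beats $\xi_m k$, together with a concentration inequality for $L_n(k)$ about its mean.

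First I would bound the mean from below. Since $k \le n$, the prefix $\bm{Y}^{(k)} = Y_1\cdots Y_k$ of $\bm{Y}^{(n)}$ already gives $L_n(k) \ge L_k(k)$ pointwise, so $\mathbb{E} L_n(k) \ge \mathbb{E} L_k(k)$. Now $\mathbb{E} L_k(k)$ is the expected LCS of two independent uniform words of length $k$ over the $m$-letter alphabet, and it is superadditive in $k$: splitting $\bm{Z}^{(k)}$ and $\bm{Y}^{(k)}$ into $\lfloor k/2\rfloor$ consecutive blocks of length two and concatenating the matchings of the corresponding blocks gives $L_k(k) \ge \sum_j L_2(2)^{(j)}$, a sum of $\lfloor k/2\rfloor$ i.i.d.\ copies of $L_2(2)$. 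Taking expectations yields $\mathbb{E} L_k(k) \ge \lfloor k/2\rfloor\,\mathbb{E} L_2(2) \ge (k-1)\beta_m$, where $\beta_m := \mathbb{E} L_2(2)/2$. Since $\xi_m < \beta_m$ by the choice \eqref{eq:ELn0bound}, the gap $\Delta := \beta_m - \xi_m$ is strictly positive, and $\mathbb{E} L_n(k) - \xi_m k \ge (k-1)\beta_m - \xi_m k = k\Delta - \beta_m \ge \Delta k/2$ as soon as $k \ge 2\beta_m/\Delta$. In particular, for $k \ge \nu n$ and $n \ge 2\beta_m/(\Delta\nu)$ this gap is at least $\Delta\nu n/2$, uniformly in $k$.

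Next I would invoke concentration. The variable $L_n(k)$ is a function of the $k+n \le 2n$ independent uniform letters of $\bm{Z}^{(k)}$ and $\bm{Y}^{(n)}$, and changing any single letter alters $L_n(k)$ by at most one. The bounded-differences (Azuma--McDiarmid) inequality therefore gives, for $t \ge 0$,
$$\mathbb{P}\bigl(L_n(k) \le \mathbb{E} L_n(k) - t\bigr) \le \exp\!\Bigl(-\frac{2t^2}{k+n}\Bigr) \le \exp\!\Bigl(-\frac{t^2}{n}\Bigr).$$
Taking $t = \mathbb{E} L_n(k) - \xi_m k \ge \Delta\nu n/2$ for all $\nu n \le k \le n$ (and $n$ past the threshold above) yields $\mathbb{P}\bigl((\Efournk)^c\bigr) = \mathbb{P}(L_n(k) < \xi_m k) \le \exp(-\Delta^2\nu^2 n/4)$. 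A union bound over the at most $n$ values of $k$ in $[\nu n, n]$ then gives $\mathbb{P}((\Efourn)^c) \le n\exp(-\Delta^2\nu^2 n/4)$, and absorbing the polynomial prefactor into the exponential (halving the rate) produces $\mathbb{P}(\Efourn) \ge 1 - \cLnalowera e^{-\cLnalowerb n}$ with $\cLnalowerb = \Delta^2\nu^2/8$ and $\cLnalowera$ chosen large enough to also render the bound vacuous for the finitely many $n < 2\beta_m/(\Delta\nu)$.

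The main obstacle is the mean estimate rather than the concentration. The crucial structural point is that the fixed value $\xi_m$ was chosen strictly below $\mathbb{E} L_2(2)/2$; superadditivity of $k \mapsto \mathbb{E} L_k(k)$ (via block concatenation) then guarantees that $\mathbb{E} L_n(k)/k$ stays bounded away from $\xi_m$ by a fixed amount $\Delta/2$ uniformly over the whole range $\nu n \le k \le n$, and it is precisely this uniform gap that the exponential concentration can exploit. Once the gap is secured, the bounded-differences estimate and the union bound are entirely routine.
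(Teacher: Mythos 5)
Your proof is correct, and it shares the paper's central structural idea: reduce to the prefix via $L_n(k)\ge L_k(k)$, then decompose into length-two blocks so that the choice $\xi_m<\mathbb{E}L_2(2)/2$ in \eqref{eq:ELn0bound} yields a uniform linear gap. Where you genuinely diverge is the concentration step. The paper uses the i.i.d.\ block sum $\sum_i \tildel{L}_i$ for \emph{both} the mean and the fluctuations: it applies an exponential Chebyshev (Cram\'er) bound to that sum, which requires showing that $p(s,\tau)=\mathbb{E}\bigl(e^{s(\tildel{L}_1-(\mathbb{E}L_2(2)-\tau))}\bigr)$ satisfies $\inf_{s<0}p(s,\tau)<e^{-c(\tau)}$ for some $c(\tau)>0$, and then computing $c(\tau_m)$ explicitly (with $\xi_m=11/10m$ one can take $c(\tau_m)=1/1000m$). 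You instead use the blocks only to lower bound the mean, and obtain the fluctuations from McDiarmid's bounded-differences inequality applied to $L_n(k)$ as a function of its $k+n\le 2n$ independent letters (the fact that the letters of $\bm{Z}^{(k)}$ are, for each fixed $k$, i.i.d.\ uniform and independent of $\bm{Y}^{(n)}$ is exactly what Lemma~\ref{lemma:LCn_dist} provides, so this is legitimate). Your route is more off-the-shelf: it avoids any analysis of the moment generating function, at the modest cost of a union bound with a polynomial prefactor, a halved exponential rate, and a separate treatment of the finitely many small $n$; the paper's per-$k$ rate $e^{-c(\tau_m)k/2}$ instead sums as a clean geometric series with no prefactor to absorb. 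The practical advantage of the paper's computation is that it feeds directly into Section~\ref{sec:constants}, where the explicit values $\cLnalowera\le 1+2000m$ and $\cLnalowerb=1/4000m^2$ are used; your constants ($\cLnalowerb=\Delta^2\nu^2/8$ with $\Delta=\mathbb{E}L_2(2)/2-\xi_m$, computable from \eqref{eq:en02}) are equally explicit, but that section would have to be reworked around them. Nothing is lost in principle.
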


\begin{proof}
Divide the sequences $\bm{Z}^{(k)}$ and $\bm{Y}^{(n)}$ into subsequences of length
  2, as given in the previous lemma. Then, by superadditivity, $L_k(k)\ge\sum_{i=1}^{k/2}\tildel{L}_i$, where $\tildel{L}_i$ is the length of
 the  longest common subsequence
  between $Y_{2(i-1)+1}Y_{2i}$ and $Z^k_{2(i-1)+1}
  Z^k_{2i}$.  Clearly, by the i.i.d.\ assumptions, $\mathbb{E}(\tildel{L}_i)=\mathbb{E}({L}_{2}(2))$ is constant. Hence for $\tau>0$,
\begin{equation}
\mathbb{P}\left(\sum_{i=1}^{k/2}\tildel{L}_i<k\left(\frac{\mathbb{E}(\tildel{L}_i)-\tau}{2}\right)\right)\le
\left(\inf_{s<0}\mathbb{E}\left(e^{s\left(\tildel{L}_1-(\mathbb{E}({L}_{2}(2))-\tau)
\right)}\right)\right)^{\frac{k}{2}}.
\end{equation}

Now let
$p(s,\tau):=\mathbb{E}\left(e^{s\left(\tildel{L}_1-(\mathbb{E}({L}_{2}(2))-\tau)\right)}\right)$, it is
easy to see that $p(s,\tau)$ is smooth in $s$, 
and that
\begin{equation*}
\begin{cases}
p(0,\tau)=1,\\
\frac{\partial p(s,\tau)}{\partial s}|_{s = 0}=\tau>0,
\end{cases}
\end{equation*}
 for every $\tau>0$. Hence,
\begin{equation}
\inf_{s<0} p(s,\tau)<e^{-c(\tau)}, 
\end{equation}
for a suitable $c(\tau)>0$. Thus, 
$$
\mathbb{P}((\Efournk)^{c})\le \mathbb{P}(L_k(k)<\xi_{m}k) \le \mathbb{P}\left(\sum_{i=1}^{k/2}\tildel{L}_i<k\left(\frac{\mathbb{E}(\tildel{L}_i)-\tau}{2}\right)\right)<e^{-c(\tau)k/2}.
$$
Now, let $\tau=\tau_m:=\mathbb{E}({L}_{2}(2))-2\xi_{m}$, let $\xi_m=11/10m$, and so
\begin{equation*}
    p(s,\tau_m)=\frac{e^{-11s/5m}\left(m e^{2 s}+(4 m^2-7 m+3) e^s+m^3-4 m^2+6 m-3\right)}{m^3}.
\end{equation*}
Since $\inf_{s<0}p(s,\tau_m)<e^{-1/1000m}$, one can choose $c(\tau_m)=1/1000m$.
Hence,
\begin{equation*}
\mathbb{P}((\Efourn)^{c})\le \sum_{k=\nu n}^{n}e^{-c(\tau_m)k/2}
= 
\frac{e^{c(\tau_m)(1-n\nu)/2}-e^{-c(\tau_m)n/2}}{e^{c(\tau_m)/2}-1}
\le 
\frac{e^{c(\tau_m)/2}}{e^{c(\tau_m)/2}-1} e^{c(\tau_m)(-n\nu)/2}. 
\end{equation*}
Choosing $\cLnalowera = \left.{e^{c(\tau_m)/2}}\middle/{(e^{c(\tau_m)/2}-1)}\right.$, and $\cLnalowerb  = c(\tau_m)(\nu)/2 $, we have,
$$\mathbb{P}(\Efourn) \ge 1 - \cLnalowera e^{- \cLnalowerb n}. $$
\end{proof}
We now finish our argument showing that, with high probability, any $(\pi, \eta) \in M_{min}^k$ contains a non-vanishing proportion $\epsilon >0$ of unmatched letters. To do so, let $$\Esixnk := \{ L_n(k) \le (1 - \epsilon) \eta(L_n(k)), \text{ for } (\pi, \eta) \in M_{min}^k \}, $$ be the event that any pair of matching subsequences $(\pi, \eta) \in M_{min}^k$ has a proportion at least $\epsilon$ of unmatched letters, and
let
$$\Esixn := \bigcap_{k = \nu n }^n \Esixnk.$$

Above, $\eta(L_n(k)) - L_n(k)$ is the number of unmatched letters, since $\eta(L_n(k))$ is the position of the last matched letter, while $L_n(k)$ is the number of matched letters. 

\begin{lemma}
\label{lemma:L_choose_delta}
Let $\epsilon>0$ be small enough such that $\delta(\epsilon)$, as given in \eqref{eq:deltaepsilon}, satisfies
\begin{equation}
\frac{1}{1 - \delta(\epsilon)} < \xi_{m}m,
\label{numbers}
\end{equation}
where $\xi_m$ is as in \eqref{eq:ELn0bound}.
Then, for all $k \ge \nu n$, 
\begin{equation}
\Ethreen \cap \Efournk \subset \Esixnk, 
\label{3k4k6k}
\end{equation}
and thus 
\begin{equation}
\Ethreen \cap \Efourn \subset \Esixn.
\label{3n4n6n}
\end{equation}
\end{lemma}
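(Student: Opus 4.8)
The plan is to establish the inclusion \eqref{3k4k6k} by contradiction and then read off \eqref{3n4n6n} as a formal consequence. Fix $k\ge \nu n$ and work on $\Ethreen\cap\Efournk$, supposing that $\Esixnk$ fails: there is a minimal pair $(\pi,\eta)\in M_{min}^k$ with $L_n(k)>(1-\epsilon)\eta(L_n(k))$. Write $L:=L_n(k)$ and $\ell:=\eta(L_n(k))$, the position in $\bm{Y}^{(n)}$ of the last matched letter, so that $L>(1-\epsilon)\ell$; since $\eta$ is strictly increasing we also have $\ell=\eta(L)\ge L$.

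The first observation is that passing to the prefix $\bm{Y}^{(\ell)}=Y_1\cdots Y_\ell$ costs nothing: the pair $(\pi,\eta)$ is a common subsequence of $\bm{Z}^{(k)}$ and $\bm{Y}^{(\ell)}$, giving $L_\ell(k)\ge L$, while $L_\ell(k)\le L_n(k)=L$ because $\bm{Y}^{(\ell)}$ is a prefix of $\bm{Y}^{(n)}$. Hence $L_\ell(k)=L$. Next I would pin down $k$. On $\Efournk$ we have $L\ge\xi_m k$, so $k\le L/\xi_m\le \ell/\xi_m$ using $L\le\ell$. Hypothesis \eqref{numbers}, namely $1/(1-\delta(\epsilon))<\xi_m m$ with $\xi_m$ as in \eqref{eq:ELn0bound}, is precisely what turns this into $k\le \ell/\xi_m<m\ell(1-\delta(\epsilon))$. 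Since each $\bm{Z}^{(k)}$ is a subsequence of $\bm{Z}^{(k+1)}$, the map $k\mapsto L_\ell(k)$ is non-decreasing, so monotonicity yields $L=L_\ell(k)\le L_\ell(m\ell(1-\delta(\epsilon)))$. On $\Ethreen$ the event $\Ethreenl$ holds, i.e.\ $L_\ell(m\ell(1-\delta(\epsilon)))<\ell(1-\epsilon)$, whence $L<\ell(1-\epsilon)$, contradicting $L>(1-\epsilon)\ell$. This proves \eqref{3k4k6k}, and then \eqref{3n4n6n} is immediate, since \eqref{3k4k6k} holds for every $k\in[\nu n,n]$, giving $\Ethreen\cap\Efourn=\Ethreen\cap\bigcap_k\Efournk\subset\bigcap_k(\Ethreen\cap\Efournk)\subset\bigcap_k\Esixnk=\Esixn$.

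The step that needs the most care, and the likely main obstacle, is ensuring that the index $\ell=\eta(L_n(k))$ at which we invoke $\Ethreenl$ lies in the range $[\nu n,n]$ over which $\Ethreen$ actually guarantees $\Ethreenl$. The upper bound $\ell\le n$ is automatic, but the only available lower bound is $\ell\ge L\ge\xi_m k\ge\xi_m\nu n$, and since $\xi_m<1$ this can dip below $\nu n$ when $k$ is close to $\nu n$. I would resolve this by enlarging the intersection defining $\Ethreen$ to range over $\ell\in[\lfloor\xi_m\nu n\rfloor,n]$; because the tail estimate of Lemma~\ref{lemma:Lla} holds for every $\ell\ge 1$, this extension leaves the exponential bound \eqref{eq:pgn} unchanged up to constants, while making $\Ethreenl$ available at exactly the required index.

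Finally, a purely bookkeeping point is that $m\ell(1-\delta(\epsilon))$ need not be an integer, so throughout the monotonicity step it should be read as its integer part; the strict inequality $k<m\ell(1-\delta(\epsilon))$ is what lets the integer $k$ be bounded by $\lfloor m\ell(1-\delta(\epsilon))\rfloor$, so this introduces no loss. With these two technicalities handled, the contradiction argument above closes the proof.
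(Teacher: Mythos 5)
Your core argument is correct and is essentially the paper's own proof: the same chain --- $L_\ell(k)=L_n(k)$ for $\ell=\eta(L_n(k))$, monotonicity of the random map $x\mapsto L_\ell(x)$, the event $\Ethreenl$, and hypothesis \eqref{numbers} --- with the paper merely arranging it contrapositively (assuming $\Ethreen$ holds and $\Esixnk$ fails, it deduces $k\ge m\ell(1-\delta(\epsilon))$, hence $L_n(k)/k\le 1/(m(1-\delta(\epsilon)))<\xi_m$, so that $\Efournk$ fails).

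Where you genuinely add something is the range issue for $\ell$, and you are right that it is a real gap rather than a presentational one: the paper invokes $\Ethreenl$ at $\ell=\eta(L_n(k))$ without checking $\ell\ge\nu n$, even though $\Ethreen$ is defined in \eqref{eq:pgn} only as $\bigcap_{\ell=\nu n}^{n}\Ethreenl$, while the only available lower bound is $\ell\ge L_n(k)\ge\xi_m k\ge\xi_m\nu n$ with $\xi_m<1$. In fact \eqref{3k4k6k} fails as literally stated: on the positive-probability realization where every inserted letter equals $\alpha_1$ (so $\bm{Z}^{(j)}=\alpha_1^{j}$ for all $j$) and $\bm{Y}^{(n)}=\alpha_1^{L}\alpha_2^{n-L}$ with $L=\lceil\xi_m\nu n\rceil$ and $k=\lceil\nu n\rceil$, one checks for $n$ large that $\Efournk$ holds and that $L_{\ell'}(m\ell'(1-\delta(\epsilon)))=L<\ell'(1-\epsilon)$ for every $\ell'\in[\nu n,n]$, so the paper's $\Ethreen$ holds as well; yet the unique minimal optimal pair matches the first $L$ letters identically, giving $\eta(L_n(k))=L_n(k)$, so $\Esixnk$ fails. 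Your repair --- letting the intersection defining $\Ethreen$ run from $\lfloor\xi_m\nu n\rfloor$ (or any $cn$ with $0<c\le \xi_m\nu$) up to $n$ --- is exactly right: since Lemma~\ref{lemma:Lla} holds for every $\ell\ge 1$, the bound \eqref{eq:pgn} survives with $\cCorEthreeb$ replaced by $\cLlab\xi_m\nu$, and downstream only the constant $B$ in Theorem~\ref{theorem:On} changes. The integer-part bookkeeping you mention is likewise harmless.
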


\begin{proof}
Let $k\in[\nu n,n]$.
 In order to prove \eqref{3k4k6k}, we show that if $\Esixnk$ does not hold while $\Ethreen$ does
hold, then $\Efournk$ does not hold either. 
Let $(\pi,\eta)\in M_{min}^k$. If $\Esixnk$ does not hold, than the proportion
of unmatched letters of $(\pi,\eta)$ is smaller than $\epsilon$, i.e.,
$$\frac{L_\ell(k)}{\ell}\geq 1-\epsilon, $$
where $\ell:=\eta(L_n(k))$. (Note that $L_\ell(k)=L_n(k)$, since
$(\pi,\eta)$ is of maximal length.) Therefore,
\begin{equation}
\label{I}
L_\ell(k)\geq \ell(1-\epsilon).
\end{equation}
Now, when $\Ethreenl$ holds, then
\begin{equation}
\label{II}
L_\ell(ml(1-\delta(\epsilon))) < \ell(1-\epsilon).
\end{equation}
Comparing \eqref{I} with \eqref{II} and noting that
the (random) map $x\mapsto L_\ell(x)$ is increasing, yield

$$k\geq m\ell(1-\delta(\epsilon)), $$
and thus
$$k\geq m\eta(L_n(k))(1-\delta(\epsilon))
\geq mL_n(k)(1-\delta(\epsilon)).$$
Hence, from \eqref{numbers},
\begin{equation*}
\frac{L_n(k)}{k}\leq\frac{1}{m(1-\delta(\epsilon))}<\xi_{m},
\end{equation*}
which implies that $\Efournk$ cannot hold.
\end{proof}
As an example, when $\epsilon\le e^{-9}/(1+\ln{m})$, 
\begin{align*}
    \delta(\epsilon)&=\epsilon+\sqrt{\frac{2}{\cLlaa}\left(\epsilon(\ln m-\ln\epsilon)-
(1-\epsilon)\ln (1-\epsilon)\right)}\\
&\le \epsilon + 2\sqrt{(1+\ln m-\ln\epsilon)\epsilon}\\
&\le e^{-9} + 2\sqrt{10e^{-9}}\\
&<\frac{1}{11},
\end{align*}
and therefore,
\begin{equation*}
    \frac{1}{1-\delta(\epsilon)}<\frac{10}{11}=\xi_m m.
\end{equation*}

In order to estimate the event $F^{(n)}$, we need to show that the unmatched letters of $\bm{Y}^{(n)}$ do not concentrate in a small number of matches of $(\pi, \eta)\in M_{min}^k$. From the minimality of $M_{min}^k$, the unmatched letters of a match of $(\pi, \eta)\in M_{min}^k$ contain at most one compartment. 

Let $N^D$ be the total number of letters in the sequence $\bm{Y}^{(n)}$ contained in a compartment of length at least $D$, and let,
$$\Efiven := \{ N^D \le \xi_{m} \epsilon\nu n/2 \},$$
where again $\xi_m$ is given via \eqref{eq:ELn0bound}.

\begin{lemma}
\label{lemma:block}
For any $0<\epsilon <1$, there exist a positive integer $D$, and positive constant $\cBlocka$ and $\cBlockb$ depending on $D$, such that 
\begin{equation}
\mathbb{P}(\Efiven) \ge 1 - \cBlocka e^{-\cBlockb n}.
\label{eq:Pjn}
\end{equation}
\end{lemma}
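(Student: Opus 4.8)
The plan is to exploit the renewal structure of the compartments. Since the letters $Y_1,Y_2,\ldots$ are i.i.d.\ uniform on $\{\alpha_1,\ldots,\alpha_m\}$ and each compartment is, by definition, the minimal window that starts afresh at $j_{i-1}$ and closes as soon as all $m$ distinct symbols have been seen, the successive compartment lengths $L^{(1)}=j_2-j_1,\,L^{(2)}=j_3-j_2,\ldots$ are i.i.d., each distributed as the coupon-collector waiting time $T$, i.e.\ the number of i.i.d.\ uniform draws needed to observe all $m$ symbols. First I would record the two facts about $T$ that drive everything: its tail is exponentially small,
\begin{equation*}
\mathbb{P}(T > t) \le m\left(1-\tfrac{1}{m}\right)^{t}
\end{equation*}
by a union bound over the symbols, so that in particular the moment generating function $\mathbb{E}e^{sT}$ is finite for $0\le s<\ln(m/(m-1))$; and $\mathbb{E}T<\infty$.

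Next I would reduce the random number of compartments $d\le n$ to a clean i.i.d.\ sum. Extending $\bm{Y}$ to an infinite i.i.d.\ sequence, let $T_1,T_2,\ldots$ be i.i.d.\ coupon-collector times of which the genuine compartment lengths are the first $d$. Since $d\le n$ and the summands are nonnegative,
\begin{equation*}
N^D=\sum_{i=1}^{d}L^{(i)}\mathbf{1}_{L^{(i)}\ge D}\le\sum_{i=1}^{n}T_i\mathbf{1}_{T_i\ge D}=:S_n,
\end{equation*}
which removes the dependence of the index set on the realization of $\bm{Y}$.

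The crux is then the choice of $D$. Set $a:=\xi_{m}\epsilon\nu/2$, a strictly positive constant independent of $n$, and let $\mu_D:=\mathbb{E}[T\mathbf{1}_{T\ge D}]$. Because $T\mathbf{1}_{T\ge D}\le T$ with $\mathbb{E}T<\infty$, and $T\mathbf{1}_{T\ge D}\to 0$ pointwise as $D\to\infty$, dominated convergence gives $\mu_D\to 0$; hence I can fix a positive integer $D=D(m,\epsilon,\nu)$ so large that $\mu_D<a$, leaving a fixed gap between the mean of each summand and the target level. This is the one genuinely delicate point: the argument hinges on the truncated summands $X_i:=T_i\mathbf{1}_{T_i\ge D}$ simultaneously having mean below the threshold and retaining a finite exponential moment, and both are delivered precisely by the exponential tail of $T$.

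Finally I would apply a Chernoff bound to the i.i.d.\ sum $S_n$. Each $X_i$ is nonnegative and dominated by $T_i$, so $\mathbb{E}e^{sX_i}\le\mathbb{E}e^{sT_i}<\infty$ for small $s>0$; moreover $a>\mu_D=\mathbb{E}X_1$ forces $\inf_{s>0}e^{-sa}\mathbb{E}e^{sX_1}<1$, since the derivative of $s\mapsto-sa+\log\mathbb{E}e^{sX_1}$ at $s=0$ equals $\mu_D-a<0$. Therefore
\begin{equation*}
\mathbb{P}((\Efiven)^{c})=\mathbb{P}(N^D>an)\le\mathbb{P}(S_n>an)\le\left(\inf_{s>0}e^{-sa}\,\mathbb{E}e^{sX_1}\right)^{n},
\end{equation*}
and reading off the rate yields the stated constants $\cBlocka,\cBlockb>0$ (one may take $\cBlocka=1$ and $\cBlockb=-\log\inf_{s>0}e^{-sa}\mathbb{E}e^{sX_1}$), giving $\mathbb{P}(\Efiven)\ge 1-\cBlocka e^{-\cBlockb n}$. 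The main obstacle is the calibration in the third paragraph—identifying the i.i.d.\ coupon-collector structure cleanly, controlling the random count $d$ and the truncated last compartment by the crude bound $d\le n$, and verifying that a single fixed $D$ can push $\mu_D$ below the constant threshold $a$ while preserving exponential moments.
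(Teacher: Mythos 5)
Your proof is correct, but it takes a genuinely different route from the paper's. You work directly with the renewal structure: the compartment lengths are i.i.d.\ (by the strong Markov property at the stopping times $j_i$), each dominated by a coupon-collector time $T_i$ with exponential tail $\mathbb{P}(T>t)\le m(1-1/m)^t$; you then dominate $N^D$ by the i.i.d.\ sum $\sum_{i=1}^n T_i\mathbf{1}_{T_i\ge D}$, choose $D$ so that the truncated mean $\mu_D=\mathbb{E}[T\mathbf{1}_{T\ge D}]$ drops below $a=\xi_m\epsilon\nu/2$, and finish with a single Chernoff bound. The paper instead counts length-$D$ windows lying inside a compartment: it bounds $N^D\le D\tilde N^D$, splits $\tilde N^D$ into $D$ subsums over residue classes modulo $D$ so as to have i.i.d.\ Bernoulli summands with success probability at most $m\left((m-1)/m\right)^D$, applies Hoeffding's inequality to one subsum and a union bound over the $D$ classes, with $D$ chosen explicitly so that $2Dm\left((m-1)/m\right)^D<\xi_m\nu\epsilon$, see \eqref{eq:chooseD} and \eqref{eq:D}. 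What the paper's route buys is explicit constants ($\cBlocka=D$ and $\cBlockb=c(\delta)/D$ with $D$ in closed form), which are reused in Section~\ref{sec:constants} to estimate $C(p,m)$; your $D$, produced by dominated convergence, is existence-only, though the same geometric tail would make it explicit, and existence is all that Lemma~\ref{lemma:block} asserts (any sufficiently large $D$ also satisfies the hypothesis of Lemma~\ref{lemma:containing}, so nothing downstream breaks). What your route buys is a cleaner independence argument: whether a window lies inside a compartment depends on the compartment boundaries, hence on the whole past, so the paper's assertion that the $\tilde Y_s$ in a fixed residue class are i.i.d.\ really rests on majorizing $\tilde Y_s$ by the indicator that the window misses some letter (those majorants, over disjoint windows, are i.i.d.), whereas the i.i.d.-ness of your compartment lengths is immediate. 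Two harmless slips on your side: a compartment length is distributed as $T-1$, not $T$ (the letter completing the collection opens the next compartment), and the last compartment may be truncated at $n$; both only help you, since $x\mapsto x\mathbf{1}_{x\ge D}$ is nondecreasing and you use the $T_i$ only as upper bounds.
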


\begin{proof}
Let
$\tilde{N}^D$ be the number of integers $s\in [0,n-D]$
such that
\begin{equation}
\label{Y=}
(Y_s,Y_{s+1},\ldots,Y_{s+D-1}) \text{ belongs to a compartment}.
\end{equation}
It is easy to check that
\begin{equation}
 N^D\leq D \tilde{N}^D.
 \label{eq:nddnd}
\end{equation}
Let now $\tilde Y_s$, $s\in [0,n-D]$, be equal to 1 if and only if \eqref{Y=} holds, and
0 otherwise. Clearly,
\begin{equation}
\label{sumtilde}
\sum_{s=1}^n\tilde{Y}_s=\tilde{N}^D.
\end{equation}
To estimate the sum \eqref{sumtilde}, decompose
it into $D$ subsums of i.i.d.\ random variables $\Sigma_1,\Sigma_2,\ldots,\Sigma_{D}$
where 
\begin{equation*}
\Sigma_i=\sum_{\genfrac{}{}{0pt}{}{s=1,\ldots,n }{ s\; {\rm mod}\;
    D=i}} \tilde Y_s, 
\end{equation*}
so that 
\begin{equation}\label{sumD}
\tilde N^D=\sum_{i=1}^D \Sigma_i. 
\end{equation}
Then, from \eqref{eq:nddnd}
\begin{equation}
\mathbb{P}\left(N^D>\frac{\xi_{m} \epsilon \nu }{2} n\right)\leq
\mathbb{P}\left(\tilde{N}^D>\frac{\xi_{m} \epsilon \nu }{2D} n\right)\leq
D \mathbb{P}\left(\Sigma_1>\frac{\xi_{m} \nu \epsilon}{2D^2} n\right), 
\end{equation}
since in \eqref{sumD} at least one of
the summands has to be larger than
$n{\xi_{m} \epsilon \nu }/2D^2$.
Now, the $\tilde{Y}_s$ appearing in the subsum $\Sigma_1$ are
i.i.d.~Bernoulli random variables with 
$$\mathbb{P}(\tilde{Y}_s=1)\le m \left(\frac{m-1}{m}\right)^D.$$ 
Therefore, 
\begin{equation}
\mathbb{P}\left(\Sigma_1>(\mathbb{E}\tilde{Y}_s+\delta)\frac{n}{D}\right)\leq e^{-c(\delta)
  \frac{n}{D}},
\end{equation}
with $c(\delta)>0$ for $\delta>0$. Take $\delta=\mathbb{P}(\tilde{Y}_s=0)=1-\mathbb{P}(\tilde{Y}_s=1)$, then $c(\delta)=-\ln{\mathbb{P}(\tilde{Y}_s=1)}$.
Thus it is enough to choose $D$ such that 
\begin{equation}
2Dm \left(({m-1})/{m}\right)^D<{\xi_{m} \nu\epsilon}.
\label{eq:chooseD}
\end{equation}
 Let $x=(m-1)/m$, $y=\xi_m\nu\epsilon/2m$, we next show that, 
\begin{equation}
    D=\frac{1}{y(\ln x)^2}=\frac{40 e^9 m^3 (1+\ln m)}{11 \ln ^2\left(\frac{m-1}{m}\right)},
    \label{eq:D}
\end{equation}
does satisfy \eqref{eq:chooseD}, or equivalently that $Dx^D<y$. With the choice in \eqref{eq:D}, $Dx^D<y$ is equivalent to $2y\ln x\ln y+2y(\ln x)^2<1$, which is true since 
\begin{equation*}
2y\ln x\ln y+2y(\ln x)^2=2(-\ln x)(-y\ln y)+2y(\ln x)^2
\le 2\ln2\cdot 9e^{-9}+2(\ln2)^2e^{-9}<1.
\end{equation*}
Choosing $\cBlocka = D$ and $\cBlockb = c(\delta)/D$, we have 
$$\mathbb{P}(\Efiven) \ge 1 - \cBlocka e^{-\cBlockb n}.$$
\end{proof}

We can now find a suitable $\lambda$ such that when $H^{(n)}$, $I^{(n)}$ and $J^{(n)}$ all hold, then $F^{(n)}$ (which depends on $\lambda$, see \eqref{eq:fk}) also holds.
\begin{lemma}
\label{lemma:containing}
Let $\epsilon>0$ be as in Lemma~\ref{lemma:L_choose_delta}, let $D$ be such that $2Dm \left(({m-1})/{m}\right)^D<{\xi_{m} \nu\epsilon}$, and let 
$$\lambda= \frac{\xi_{m} \nu}{2} \frac{\epsilon}{D-1}.$$ Then, for $k \ge \nu n$,
\begin{equation}
\Efourn \cap \Efiven \cap \Esixnk \subset F_{k}^{(n)}, 
\label{12subset3}
\end{equation}
and thus
\begin{equation}
\Efourn  \cap \Efiven \cap \Esixn \subset F^{(n)}.
\label{eq:12subset32}
\end{equation}
\end{lemma}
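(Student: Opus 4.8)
The plan is to prove the stronger pointwise inclusion \eqref{12subset3}, namely $\Efourn \cap \Efiven \cap \Esixnk \subset F_{k}^{(n)}$ for each fixed $k\in[\nu n,n]$, and then deduce \eqref{eq:12subset32} by intersecting over $k$: since $\Efiven$ does not depend on $k$ and $\Esixn=\bigcap_{k=\nu n}^n\Esixnk$, one gets $\Efourn\cap\Efiven\cap\Esixn\subset\bigcap_{k}F_k^{(n)}=F^{(n)}$. So I would fix $k\ge\nu n$ and an arbitrary $(\pi,\eta)\in M_{min}^k$, and show that on $\Efourn \cap \Efiven \cap \Esixnk$ this pair has at least $\lambda n$ nonempty matches, which is exactly what defines membership in $F_k^{(n)}$.

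First I would lower bound the total number of unmatched letters $U:=\eta(L_n(k))-L_n(k)$. On $\Esixnk$ we have $L_n(k)\le(1-\epsilon)\eta(L_n(k))$, hence $U\ge\epsilon\,\eta(L_n(k))\ge\epsilon\,L_n(k)$; combining this with $L_n(k)\ge\xi_m k\ge\xi_m\nu n$, which holds because $\Efourn\subset\Efournk$, gives $U\ge\epsilon\xi_m\nu n$. Next I would discard the unmatched letters sitting in long compartments. Since $N^D$ counts \emph{all} letters of $\bm{Y}^{(n)}$ lying in compartments of length at least $D$, the number of unmatched letters lying in compartments of length less than $D$ is at least $U-N^D$, and on $\Efiven$ this is at least $\epsilon\xi_m\nu n-\xi_m\epsilon\nu n/2=\xi_m\epsilon\nu n/2$.

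The final step converts this surplus of unmatched letters in short compartments into a count of nonempty matches. Here I would invoke the structural consequence of minimality recorded before Lemma~\ref{lemma:block}: for $(\pi,\eta)\in M_{min}^k$, the unmatched letters of any single match come from at most one compartment of $\bm{Y}^{(n)}$. Consequently a match whose unmatched letters lie in a compartment of length less than $D$ carries at most $D-1$ unmatched letters, so the $\xi_m\epsilon\nu n/2$ unmatched letters in short compartments must be distributed over at least $(\xi_m\epsilon\nu n/2)/(D-1)=\lambda n$ distinct (necessarily nonempty) matches. This gives $(\pi,\eta)$ at least $\lambda n$ nonempty matches, which establishes $F_k^{(n)}$ and hence \eqref{12subset3}.

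I expect the genuinely delicate point to be the per-match ``single compartment'' property together with the attendant ``$\le D-1$ unmatched letters'' bound; everything else is bookkeeping across the three high-probability events. This structural fact rests on minimality: if the unmatched letters of a match were spread over two or more compartments, the stretch of $\bm{Y}^{(n)}$ strictly between two consecutive matched positions would contain a full compartment and therefore all $m$ letters of the $Y$-alphabet---in particular a copy of the letter matched at the right endpoint---which could then be re-matched earlier so as to strictly decrease $\eta$, contradicting $(\pi,\eta)\in M_{min}^k$. I would be careful to apply the compartment-length convention consistently (``length less than $D$'' yielding at most $D-1$ positions available to a match) so that the denominator aligns precisely with the definition $\lambda=(\xi_m\nu/2)\,\epsilon/(D-1)$.
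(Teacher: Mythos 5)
Your proposal is correct and follows essentially the same route as the paper's proof: lower-bound the unmatched letters by $\epsilon\xi_m\nu n$ using $\Esixnk$, $\Efourn$ and $k\ge\nu n$, subtract the at most $\xi_m\epsilon\nu n/2$ letters in long compartments via $\Efiven$, and then use the minimality-based ``one compartment per match'' property to conclude that the remaining letters spread over at least $\xi_m\epsilon\nu n/(2(D-1))=\lambda n$ nonempty matches. Your added sketch of why minimality forces each match's unmatched letters into a single compartment is a nice supplement---the paper merely asserts this fact in the discussion preceding Lemma~\ref{lemma:block}---but it does not change the argument's structure.
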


\begin{proof}
  We prove \eqref{12subset3}, from which \eqref{eq:12subset32} immediately follows. On $\Esixnk$, each $(\pi,\eta)\in
  M_{min}^k$ has at least $\epsilon \eta(L_n(k))$ unmatched letters.
 But,
\begin{equation}
\label{triv1}
 \eta(L_n(k))\geq L_n(k).
\end{equation}
  When $\Efourn$
holds,
\begin{equation}
\label{triv2}L_n(k)\geq \xi_m k.
\end{equation}
Since $k\geq \nu n$, \eqref{triv1} and
\eqref{triv2}, together imply
that  the number of unmatched letters of $(\pi,\eta)\in M_{min}^k$ is at least    
$\epsilon\; \xi_{m} \nu n.$  By $\Efiven$,
 there are at most  $\xi_{m} \nu\epsilon n/2$ letters contained in 
compartments of length at least $D$.  Thus, there
are at least $\xi_{m} \nu\epsilon n/2$ unmatched letters contained in compartments
of length less than $D$.  But, every match of  $(\pi,\eta)\in M_{min}^k$ contains
unmatched letters from only one compartment, and as such every
match can contain at most
$D-1$ 
unmatched letters from compartments of length less than $D$.  Therefore, these $\epsilon\,\xi_{m} \nu n/2$ unmatched letters
which are not in $N^D$, must fill at least $\epsilon\,\xi_{m} \nu n/(2D-2)$
 matches of  $(\pi,\eta)\in M_{min}^k$. Hence,
$(\pi,\eta)\in M_{min}^k$ has at least
$\epsilon\,\xi_{m} \nu n/(2D-2)$ non-empty matches.
\end{proof}

\noindent Combining Lemma~\ref{lemma:L_choose_delta} and Lemma~\ref{lemma:containing} gives,
$$\mathbb{P}((F^{(n)})^c) \le \mathbb{P}((\Ethreen)^{c}) + \mathbb{P}((\Efourn)^{c}) + \mathbb{P}((\Efiven)^{c}),$$
which via \eqref{eq:pgn}, \eqref{eq:Lna_lower}, and \eqref{eq:Pjn} entails 
$$ \mathbb{P}(F^{(n)}) \ge 1 -  \cCorEthree e^{-\cCorEthreeb n}  - \cLnalowera e^{- \cLnalowerb n}  - \cBlocka e^{-\cBlockb n}. $$
Next, recalling the definition of $O_n$ in \eqref{eq:On}, observe that 
$$\mathbb{P}(O_n^c) \le \mathbb{P}(O_n^c \cap E^{(n)} \cap F^{(n)}) + \mathbb{P}((F^{(n)})^c) + \mathbb{P}((E^{(n)})^c).$$

The next result estimates the first probability, on the above right hand side, and, therefore, completes the proof of Theorem~\ref{theorem:On}. 

\begin{lemma}
\label{lemma:On_intersection}
Let $K\le 1/2m$, then
$$\mathbb{P}(O_n^c \cap E^{(n)} \cap F^{(n)}) \le ne^{-2K^2h(n)}.$$
\end{lemma}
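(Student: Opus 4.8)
The plan is to express $L_n(j)-L_n(i)$ as a sum of $\{0,1\}$-valued increments whose conditional means are bounded below on $E^{(n)}\cap F^{(n)}$, to bound each admissible pair $(i,j)$ by a one-sided Azuma--Hoeffding inequality, and to conclude with a union bound whose cardinality is controlled by $|I|^2$.

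First I would write, for integers $i<j$,
$$L_n(j)-L_n(i)=\sum_{k=i}^{j-1}D_k,\qquad D_k:=L_n(k+1)-L_n(k)\in\{0,1\},$$
the increments lying in $\{0,1\}$ because appending one letter to $\bm Z^{(k)}$ changes the LCS by $0$ or $1$. Let $\mathcal F_k:=\sigma(\bm Y^{(n)},\bm Z^{(k)})$. On $E^{(n)}$ each increment with $k<\nu n$ equals $1$, while on $F^{(n)}=\bigcap_{k=\nu n}^{n}F_k^{(n)}$ the bound \eqref{lb} gives $\mathbb P(D_k=1\mid\mathcal F_k)\ge\lambda/m$ for $k\ge\nu n$. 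Hence on $E^{(n)}\cap F^{(n)}$ one has $\mathbb E[D_k\mid\mathcal F_k]\ge 2K$ for every relevant $k$: for $k<\nu n$ because $D_k=1$ and $K\le 1/2m\le 1/2$, and for $k\ge\nu n$ from \eqref{lb} together with the choice of $K$.

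Because $E^{(n)}\cap F^{(n)}$ is a global, non-adapted event, I would not condition on it directly but instead freeze the walk at the first loss of drift. Fix $i<j$ in $I$ with $j-i\ge h(n)$ and set $\tau:=\inf\{k\ge i:\ \mathbb E[D_k\mid\mathcal F_k]<2K\}$, a stopping time for $(\mathcal F_k)$, so that $\tau>j$ on $E^{(n)}\cap F^{(n)}$. Then
$$W_r:=\sum_{k=i}^{(r\wedge\tau)-1}\bigl(2K-D_k\bigr),\qquad r\ge i,$$
is a supermartingale with $W_i=0$ and increments in $[2K-1,2K]$, of range at most $1$; moreover, on $E^{(n)}\cap F^{(n)}\cap\{L_n(j)-L_n(i)<K(j-i)\}$ one has $W_{j\wedge\tau}=W_j>K(j-i)$. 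The one-sided Azuma--Hoeffding inequality for supermartingales with increments in an interval of length one then yields
$$\mathbb P\bigl(\{L_n(j)-L_n(i)<K(j-i)\}\cap E^{(n)}\cap F^{(n)}\bigr)\le\exp\!\Bigl(-\frac{2\bigl(K(j-i)\bigr)^2}{\,j-i\,}\Bigr)=e^{-2K^2(j-i)}\le e^{-2K^2h(n)}.$$

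Finally, I would sum over the admissible pairs. Since $I=[np-\sqrt{np(1-p)},\,np+\sqrt{np(1-p)}]$ has length $2\sqrt{np(1-p)}$, the number of integer pairs $(i,j)$ with $i,j\in I$ is at most $|I|^2\le 4np(1-p)\le n$, because $4p(1-p)\le 1$; hence
$$\mathbb P(O_n^c\cap E^{(n)}\cap F^{(n)})\le\!\!\sum_{\substack{i,j\in I\\ j\ge i+h(n)}}\!\!\mathbb P\bigl(\{L_n(j)-L_n(i)<K(j-i)\}\cap E^{(n)}\cap F^{(n)}\bigr)\le n\,e^{-2K^2h(n)}.$$
The main obstacle is the handling of the global event $E^{(n)}\cap F^{(n)}$, on which alone the conditional drift of $D_k$ is guaranteed: the stopping-time freeze at $\tau$ is what turns $W_{r\wedge\tau}$ into a genuine supermartingale on the whole space while leaving $W_j$ untouched on the event of interest. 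The secondary delicate point is the constant bookkeeping, namely verifying that the combined deterministic drift on $\{k<\nu n\}$ and the probabilistic drift \eqref{lb} on $\{k\ge\nu n\}$ are together at least $2K$, so that the Azuma exponent collapses to exactly $2K^2h(n)$.
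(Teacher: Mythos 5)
Your overall architecture (a conditional drift of $2K$, a one-sided Azuma--Hoeffding bound per pair, and a union bound over at most $n$ pairs) mirrors the paper's proof, and for $k\ge\nu n$ your stopping-time device is a legitimate alternative to the paper's trick of replacing the increment $L_n(k+1)-L_n(k)$ by $1$ off $F_k^{(n)}$: there the drift claim is sound precisely because $F^{(n)}\subset F_k^{(n)}$ and $F_k^{(n)}$ is measurable with respect to $\mathcal{F}_k=\sigma(\bm{Z}^{(k)},\bm{Y}^{(n)})$, so \eqref{lb} bounds $\mathbb{E}[D_k\mid\mathcal{F}_k]$ on an adapted event. The genuine gap is your treatment of $k<\nu n$. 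You assert that on $E^{(n)}\cap F^{(n)}$ one has $\mathbb{E}[D_k\mid\mathcal{F}_k]\ge 2K$ for such $k$ ``because $D_k=1$''. This is a non sequitur: $E^{(n)}=\{L_n(\nu n)=\nu n\}$ is not $\mathcal{F}_k$-measurable for $k<\nu n$, and the fact that the realized increment equals $1$ on a future-dependent event gives no lower bound on the conditional expectation at points of that event. Indeed the bound is false on the adapted envelope $\{L_n(k)=k\}\supset E^{(n)}$: take $m=2$, $k$ even, $\bm{Z}^{(k)}=(\alpha_1\alpha_2)^{k/2}$ and $\bm{Y}^{(n)}=(\alpha_1\alpha_2)^{k/2}\alpha_2^{n-k}$; then $L_n(k)=k$, but the only insertion preserving a full match is $\alpha_2$ at position $k$ (inserting $\alpha_1$ exceeds the supply of $\alpha_1$'s in $\bm{Y}^{(n)}$, and inserting $\alpha_2$ at a position $j<k$ shifts the alternating pattern into the all-$\alpha_2$ tail, where it cannot be completed), so $\mathbb{E}[D_k\mid\mathcal{F}_k]=1/(2(k-1))$, far below $2K=\lambda/m$ for large $k$; moreover such configurations do continue to full matches, so they occur at points of $E^{(n)}$. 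Consequently, whenever $I$ reaches below $\nu n$ (which happens for small $p$, and the lemma must hold for all $p$), your $\tau$ can fire strictly before $j$ on the good event; then $W_{j\wedge\tau}=W_\tau=(\tau-i)(2K-1)<0$, the key inequality $W_{j\wedge\tau}>K(j-i)$ on $E^{(n)}\cap F^{(n)}\cap\{L_n(j)-L_n(i)<K(j-i)\}$ collapses, and the Azuma bound on the stopped process no longer says anything about that event. Nothing in your argument shows that intersecting with $F^{(n)}$ (a condition on times $\ge\nu n$) restores a conditional drift at times $k<\nu n$.

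The paper avoids this by never running the concentration argument below $\nu n$: its auxiliary event $\tilde{O}_n$ involves only pairs in $I\cap[\nu n,n]$, where the adapted events $F_k^{(n)}$ supply the drift, while on $[0,\nu n]\cap I$ it uses $E^{(n)}$ only pointwise, through the deterministic fact that the slope there equals $1\ge K$. Your proof can be repaired along the same lines: start the stopping time at $\max(i,\nu n)$, use $D_k=1$ pointwise on $E^{(n)}$ over $[i,\nu n)$, and apply Azuma only to $\sum_{k=\max(i,\nu n)}^{j-1}D_k$; for pairs straddling $\nu n$ one checks that either $\nu n-i\ge K(j-i)$ already gives the slope condition, or $j-\nu n\ge(j-i)/2\ge h(n)/2$, at the cost of at worst a factor $2$ in the exponent. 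As written, however, the proof has a genuine hole in the sub-$\nu n$ range.
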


\begin{proof}
Let $\lambda$ given as in Lemma~\ref{lemma:containing} be at most 1,  and let
$K:= \lambda/{2m}$, so that $K\leq 1/{2m}$.
Let 
$$\Delta(k):= \begin{cases} 
 L_n(k+1)-L_n(k) & \text{when $F^{(n)}_{k}$
holds,  } \\
 1 & \text{ otherwise. }
\end{cases}$$
From 
\eqref{lb}, it follows that:
\begin{equation}
\label{martingale}
\mathbb{P}\left .\left(\Delta(k)=1\right|\sigma_k\right)\geq \lambda/m ,
\end{equation}
where $\sigma_k$ denote the $\sigma$-field generated by the $Z_i^k$ and $Y_j$, namely,
\begin{equation*}
\sigma(Z_i^k,Y_j\mid i\le k,j\le n).
\end{equation*}
Moreover, $\Delta(k)$ is equal to zero or one (since $L_n(\cdot)$ is non-decreasing on $\mathbb{N}$)
and is also $\sigma_k$-measurable.
Let 
$$\tilde{L}_n(k) = 
\begin{cases}
L_n( \nu n)+
\sum_{i=\nu n}^{k-1}\Delta(i) & \text{ for } k\in [\nu n,n], \\
L_n(k) & \text{ for } k\in [0,\nu n].
\end{cases}$$
Note that when $F^{(n)}$
holds, then
\begin{equation}
\label{Idontkno}
L(k)=\tilde{L}(k),
\end{equation} 
for all $k\in [0,n-1]$. 
Define
$$\tilde{O}^{(n)}_{i,j}=\{\tilde{L}_n(j)-\tilde{L}_n(i)\geq K(j-i)\}, $$
and
$$\tilde{O}_{n}=\bigcap_{\substack{i,j \in I\cap[\nu n, n]\\j  \ge i + h(n)}}\tilde{O}^{(n)}_{i,j}. $$

When $E^{(n)}$ holds, then $L_n(k)$ has a slope of 
one on the domain $[0,\nu n]$. Therefore, since $K\leq {1}/{2m}$, the slope
condition of $O_{n}$ holds on the
domain $[0,\nu n]\cap I$.
When $F^{(n)}$ holds, then $L_n(k)$ and $\tilde{L}_n(k)$
are equal.  Therefore, when $F^{(n)}$ and
$\tilde{O}_n$ both hold, then the slope condition
of $O_n$ is verified on the domain $[\nu n,n]\cap I$.  Hence,
\begin{equation}
\label{sex}
E^{(n)}\cap F^{(n)} \cap \tilde{O}_n= E^{(n)}\cap F^{(n)}\cap O_n,
\end{equation}
and thus
$$\mathbb{P}(O_{n}^{c}\cap E^{(n)}\cap F^{(n)})
=\mathbb{P}(\tilde{O}_{n}^{c}\cap E^{(n)}\cap F^{(n)})
\leq \mathbb{P}(\tilde{O}_{n}^{c} ).$$
It only remains to estimate $\mathbb{P}(\tilde{O}_{n}^{c})$.  First,
\begin{equation}
\label{final}
\mathbb{P}(\tilde{O}_n^{c})\leq 
\sum_{\substack{i,j \in I\cap[\nu n, n]\\j  \ge i + h(n)}}\mathbb{P}((\tilde{O}^{(n)}_{i,j})^c). 
\end{equation}
Then, from Hoeffding's exponential inequality, for any $t>0$,
\begin{equation}
\mathbb{P}\left(\frac{\sum_{s=i}^j\Delta(s)}{j-i}<\mathbb{E}\Delta(i)-t\right)
<
e^{-2(j-i)t^2}.
\label{eq:apply-hoeffding}
\end{equation}
With the help of \eqref{martingale}, and since $K=\lambda/2m$, by choosing $t=\mathbb{E}\Delta(i)-K$, \eqref{eq:apply-hoeffding} becomes
\begin{equation}
\label{martingale2}
\mathbb{P}((\tilde{O}^{(n)}_{i,j})^c)\leq e^{-2|i-j|(\mathbb{E}\Delta(i)-K)^2}
\le e^{-2K^2h(n)}, 
\end{equation}
for all $i,j\in[\nu n,n]$.
Then, note that
there are at most $n$ terms in the sum in
\eqref{final}. Thus
 \eqref{final} and \eqref{martingale2} together imply that
\begin{equation}
\label{martingale3}
\mathbb{P}(\tilde{O}_n^{c})\leq n e^{-2K^2h(n)}.
\end{equation}

\end{proof}

\section{Estimation of the Constants}
\label{sec:constants}
To estimate $C$ in \eqref{eq:main}, we need to first  estimate various constants.

First let $\nu=1/2m$. Next, to estimate $K_1$, the right hand side of \eqref{E:lowerbound4-new} needs to be lower bounded. When $n\ge 900/(p(1-p))$, \eqref{E:lowerbound7} gives that 
\begin{equation*}
\Var(N|\mathbf{1}_{N \in I }=1)\ge\frac{1}{1000}p(1-p)n.
\end{equation*}
Therefore, any $K_1$ satisfying $0<K_1<\sqrt{p(1-p)}/(10\sqrt{10})$ is fine. Choosing $K_1=\sqrt{p(1-p)}/(20\sqrt{5})$, then
\begin{equation*}
    \Var(N|\mathbf{1}_{N \in I }=1)-h(n)^2\ge \frac{1}{2000}p(1-p)n.
\end{equation*}

To estimate $A$ and $B$ in \eqref{eq:thmOn} requires upper bounds on $\cCorEthree$, $\cLnalowera$, $\cBlocka$ and lower bounds for $\cLlab$,  $\cLnalowerb$, $\cBlockb$. As shown after Lemma~\ref{lemma:L_choose_delta}, we can choose $\epsilon=e^{-9}/(1+\ln{m})$, then
\begin{align*}
      \cLlab &= (\delta(\epsilon)-\epsilon)^2\cLlaa/2\\
    &= \epsilon\ln m -(1-\epsilon)\ln{(1-\epsilon)}-\epsilon\ln{\epsilon}\\
    &\ge\epsilon\ln m\ge e^{-10},
\end{align*}
and 
\begin{equation*}
\cCorEthree ={1}/{(1 - e^{-\cLlab })}\le e^{11}.
\end{equation*}
Lemma~\ref{lemma:Lna_lower} gives 
\begin{equation*}
    \cLnalowera=\frac{e^{1/2000m}}{e^{1/2000m}-1}\le 1+2000m,
\end{equation*}
and
\begin{equation*}
    \cLnalowerb = \frac{1}{4000m^2}.
\end{equation*}
Lemma~\ref{lemma:block} gives 
\begin{equation*}
    \cBlocka = D \le 20e^9,
\end{equation*}
and
\begin{align*}
    \cBlockb &= \frac{c(\delta)}{D}\\
    &=\frac{-\ln\mathbb{P}(\tilde{Y}_s=1)}{D}\\
    &\ge \ln\frac{m}{m-1}-\frac{\ln m}{D}\\
    &=\ln\frac{m}{m-1}\left(
    1-\frac{11\ln m\ln\frac{m}{m-1}}{40e^9m^3(1+\ln m)}
    \right)\\
    &\ge \ln\frac{m}{m-1}\left(1-\frac{1}{20e^9}\right)
    \ge \frac{1}{2m}.
\end{align*}
Therefore, one can take $A=\max\{1+2000m,20e^9\}$ and $B=e^{-10}/m^2$. Then, for $n\ge e^{10}m^2\ln{(80e^9+8000m)}$, $\mathbb{P}(O_n)\ge 1/2$

Note that when $n\ge 400/(p(1-p))$, we also have $\mathbb{P}(N\in I)\ge 1/2$. Let 
\begin{equation*}
    \cfinala=\frac{K^2}{64000}p(1-p),
\end{equation*}
and let
\begin{equation*}
    \cfinalb=\min_{n\le \max\{900/(p(1-p)),e^{10}m^2\ln{(80e^9+8000m)}\}}
    \frac{\Var LC_n}{n},
\end{equation*}
then one can choose $C=\min\{\cfinala,\cfinalb\}$ in \eqref{eq:main}. 
\section{Concluding Remarks}
\begin{itemize}
    \item The results of the paper show that we can approach as closely as we want the uniform case and have a linear order on the variance of $LC_n$. However, the lower order of the variance in the uniform case is still unknown although numerical results, see \cite{liu_2017}, leave little doubt that the variance is linear in the length of the words. (Unfortunately, the estimates of the previous section, on $C=C(p,m)$ in \eqref{eq:main}, converge to zero as $p\to 0$.)   
    \item Combining the above results with techniques and results presented in \cite{houdre_order_2016}, the upper and lower bound obtained above can be generalized to provide estimates of order $n^{r/2}$, $r\ge 1$, on the centered $r$-th moment of $LC_n$.

   \item  Finally, the above results might also be extended to the general case where the letters of one sequence are taken with probability $p_i$, $i = 1, 2,\ldots,m$, 
where $p_i>0$ and $\sum_{i=1}^m p_i = 1$, while for the other sequence the first $m$ letters are taken with probability $p_i -  r_i>0$ and the 
extra letter is taken with probability $p=\sum_{i=1}^m r_i$.  
Then many of the lemmas remain true replacing $1/m$ by $\inf_{i=1,\ldots, m} p_i$  or $\inf_{i=1,\ldots, m} (p_i - r_i)/(1-\sum_{k=1}^m r_k)$. For example, in the heading of Section~\ref{sec:klnun} and Section~\ref{sec:kgnun}, in \eqref{eqn:lowerbd}, \eqref{lb}, \eqref{eq:en0}, and Lemma~\ref{lemma:On_intersection}, the $1/m$ can be replaced by $\inf_{i=1,\ldots, m} p_i$. 
In \eqref{eq:lem52} of Lemma~\ref{lem:plmldl}, and in the definition of $\Ethreenl$ in Lemma~\ref{lemma:Lla}, the term $L_\ell(m\ell(1-\delta))$ would have to be replaced with
\begin{equation*}
  L_\ell\left(\frac{\ell(1-\delta)(1-\sum_{k=1}^m r_k)}{\inf_{i=1,\ldots, m} (p_i - r_i)}\right). 
\end{equation*}
However, some constants that needs delicate estimations, such as $\xi_m$, could be a further research topic.

\end{itemize}

\bibliographystyle{plain}
\bibliography{lcs}

\begin{thebibliography}{10}

\bibitem{amsalu_sparse_2016}
Saba Amsalu, Christian Houdré, and Heinrich Matzinger.
\newblock Sparse {Long Blocks} and the {Variance} of the {Longest Common
  Subsequences} in {Random Words}.
\newblock {\em arXiv:1204.1009v2 [math-ph]}, September 2016.

\bibitem{bonetto_fluctuations_2006}
Federico Bonetto and Heinrich Matzinger.
\newblock Fluctuations of the {Longest Common Subsequence} in the {Asymmetric
  Case} of 2- and 3-{Letter Alphabets}.
\newblock {\em Latin American Journal of Probability and Mathematical
  Statistics}, 2:195--216, 2006.

\bibitem{chvatal_longest_1975}
Vacláv Chvátal and David Sankoff.
\newblock Longest {Common} {Subsequences} of {Two} {Random} {Sequences}.
\newblock {\em Journal of Applied Probability}, 12(2):306--315, 1975.

\bibitem{chvatal_upper-bound_1983}
Vacláv Chvátal and David Sankoff.
\newblock An {Upper-bound Technique} for {Lengths} of {Common Subsequences}.
\newblock In David Sankoff and Joseph Kruskal, editors, {\em Time {Warps},
  {String} {Edits}, and {Macromolecules}: {The} {Theory} and {Practice} of
  {Sequence} {Comparison}}. Addison-Wesley, Reading, Massachusetts, 1983.

\bibitem{dancik_expected_1994}
Vladimír Dancík.
\newblock {\em Expected {Length} of {Longest} {Common} {Subsequences}}.
\newblock PhD thesis, 1994.

\bibitem{deken_limit_1979}
Joseph~G. Deken.
\newblock {Some Limit Results for Longest Common Subsequences}.
\newblock {\em Discrete Mathematics}, 26(1):17--31, January 1979.

\bibitem{deken_probabilistic_1983}
Joseph~G. Deken.
\newblock {Probabilistic Behavior of Longest-Common-Subsequence Length}.
\newblock In David Sankoff and Joseph Kruskal, editors, {\em Time {Warps},
  {String} {Edits}, and {Macromolecules}: {The} {Theory} and {Practice} of
  {Sequence} {Comparison}}. Addison-Wesley, Reading, Massachusetts, 1983.

\bibitem{gong_lower_2016}
Ruoting Gong, Christian Houdré, and Jüri Lember.
\newblock Lower {Bounds} on the {Generalized} {Central} {Moments} of the
  {Optimal} {Alignments} {Score} of {Random} {Sequences}.
\newblock {\em Journal of Theoretical Probability}, pages 1--41, December 2016.

\bibitem{houdre_order_2016}
Christian Houdré and Jinyong Ma.
\newblock On the {Order} of the {Central} {Moments} of the {Length} of the
  {Longest} {Common} {Subsequences} in {Random} {Words}.
\newblock In {\em High {Dimensional} {Probability} {VII}}, pages 105--136.
  Birkhäuser, Cham, 2016.
\newblock DOI: 10.1007/978-3-319-40519-3\_5.

\bibitem{houdre_variance_2016}
Christian Houdré and Heinrich Matzinger.
\newblock On the {Variance} of the {Optimal} {Alignments} {Score} for {Binary}
  {Random} {Words} and an {Asymmetric} {Scoring} {Function}.
\newblock {\em Journal of Statistical Physics}, 164(3):693--734, August 2016.

\bibitem{kiwi_speculated_2009}
Marcos Kiwi and José Soto.
\newblock On a {Speculated} {Relation} {Between} {Chvátal}–{Sankoff}
  {Constants} of {Several} {Sequences}.
\newblock {\em Combinatorics, Probability and Computing}, 18(04):517--532, July
  2009.

\bibitem{lember_standard_2009}
Jüri Lember and Heinrich Matzinger.
\newblock {Standard Deviation of the Longest Common Subsequence}.
\newblock {\em The Annals of Probability}, 37(3):1192--1235, May 2009.

\bibitem{lember_lower_2016}
Jüri Lember, Heinrich Matzinger, Joonas Sova, and Fabio Zucca.
\newblock {Lower Bounds for Moments of Global Scores of Pairwise} {Markov}
  {Chains}.
\newblock {\em arXiv:1602.05560 [math]}, February 2016.
\newblock arXiv: 1602.05560.

\bibitem{liu_2017}
Qingqing {Liu} and Christian {Houdr{\'e}}.
\newblock {Simulations, Computations, and Statistics for Longest Common
  Subsequences}.
\newblock {\em arXiv:1705.06826}, May 2017.

\bibitem{lueker_improved_2009}
George~S. Lueker.
\newblock Improved {Bounds} on the {Average} {Length} of {Longest} {Common}
  {Subsequences}.
\newblock {\em J. ACM}, 56(3):17:1--17:38, May 2009.

\bibitem{steele_efron-stein_1986}
J.~Michael Steele.
\newblock An {Efron}-{Stein} {Inequality} for {Nonsymmetric} {Statistics}.
\newblock {\em The Annals of Statistics}, 14(2):753--758, June 1986.

\end{thebibliography}

\end{document}